\newtheorem{theorem}{Theorem}
\theoremstyle{definition}
\newtheorem{example}[theorem]{Example}
\theoremstyle{lemma}
\newtheorem{lemma}[theorem]{Lemma}
\theoremstyle{remark}
\newtheorem{remark}[theorem]{Remark}
\numberwithin{theorem}{section}
\numberwithin{equation}{section}
\numberwithin{table}{section}
\numberwithin{figure}{section}
\newcommand{\uo}{\ensuremath{u_0}}
\newcommand{\uB}{\ensuremath{u^{[1]}}}
\newcommand{\duB}{\ensuremath{{\dot u}^{[1]}}}
\newcommand{\uC}{\ensuremath{u^{[2]}}}
\def\P{P_0}
\def\one{\mathbbm{1}_{\Omega_c}}   % alternativ \chi
\def\ttau{\tfrac \tau 2}
\DeclareMathOperator{\ee}{e}
\newcommand{\calF}{\ensuremath{F}}
\newcommand{\calG}{\ensuremath{G}}
\def\R{\mathbb{R}}
\definecolor{myBlue2}{RGB}{113,104,238} % medium slate blue	
\definecolor{myBlue3}{RGB}{30,144,255} % dodger blue
\definecolor{myGreen2}{RGB}{69,169,0} % chatreuse
\definecolor{myGreen3}{RGB}{154,205,50} % olive Drap
\definecolor{myRed2}{RGB}{165,42,42} % brown
\DeclareMathOperator{\id}{id}
\DeclareMathOperator{\ddiv}{div}
\DeclareMathOperator{\range}{im}
\newcommand{\calA}{\ensuremath{\mathcal{A}} }
\newcommand{\calAo}{\ensuremath{\mathcal{A}_0}}
\newcommand{\calB}{\ensuremath{\mathcal{B}} }
\newcommand{\calD}{\ensuremath{\mathcal{D}} }
\newcommand{\cL}{\ensuremath{\mathcal{L}} }
\newcommand{\calS}{\ensuremath{\mathcal{S}} }
\def\ds{\,\text{d}s}
\def\dx{\,\text{d}x}
\def\dxi{\,\text{d}\xi}
\begin{document}
%=======================================================================================
%[Splitting Methods for PDAEs]
\title[Splitting Methods for Constrained Diffusion-Reaction Systems]{Splitting Methods for Constrained\\ Diffusion-Reaction Systems}
\author[]{R. Altmann$^{*}$ and A. Ostermann$^{\dagger}$}
\address{${}^{*}$ Institut f\"ur Mathematik MA4-5, Technische Universit\"at Berlin, Stra\ss e des 17.~Juni 136, 10623 Berlin, Germany}
\address{${}^{\dagger}$ Department of Mathematics, University of Innsbruck, Technikerstra\ss e 13, 6020 Innsbruck, Austria}
\email{raltmann@math.tu-berlin.de, alexander.ostermann@uibk.ac.at}
\thanks{${}^{*}$ The work of R. Altmann was supported by a postdoc fellowship of the German Academic Exchange Service (DAAD)}

\date{\today}
\keywords{}
%=======================================================================================
%=========  Abstract
%=======================================================================================
\begin{abstract}
We consider Lie and Strang splitting for the time integration of constrained partial differential equations with a nonlinear reaction term.
Since such systems are known to be sensitive with respect to perturbations, the splitting procedure seems promising as we can treat the nonlinearity separately.
This has some computational advantages, since we only have to solve a linear constrained system and a nonlinear ODE.
However, Strang splitting suffers from order reduction which limits its efficiency.
This is caused by the fact that the nonlinear subsystem produces inconsistent initial values for the constrained subsystem.
The incorporation of an additional correction term resolves this problem without increasing the computational cost.
Numerical examples including a coupled mechanical system illustrate the proven convergence results.
\end{abstract}

%=======================================================================================
%=========  Title / Contents
%=======================================================================================
\maketitle
\setcounter{tocdepth}{2}
% \tableofcontents
%
{\tiny {\bf Key words.} PDAEs, diffusion-reaction equations, splitting methods, order reduction, Lie splitting, Strang splitting}\\
\indent
{\tiny {\bf AMS subject classifications.}  {\bf 65M12}, {\bf 65L80}, {\bf 65J15}}
%
%=======================================================================================
%=========  Introduction
%=======================================================================================
\section{Introduction}
Splitting methods for evolution equations have been studied extensively in the past years.
The use of such methods for time integration allows one to split the problem into subsystems which can be integrated more efficiently or sometimes even exactly \cite{HunV95, McLQ02, HunV03}.
Moreover, splitting methods perform well in the preservation of geometric properties, which is one of the main keys for numerical integration \cite{HaiLW10}.
For reaction-diffusion equations with a linear elliptic diffusion operator, several contributions to splitting methods can be found in the literature, e.g., \cite{Des01, HanKO12, EinO15}. % HunV95

%% allgemeines Setup
In this paper, we consider nonlinear evolution equations of diffusion-reaction type which have an additional constraint.
The diffusion is assumed to be a linear elliptic differential operator, whereas the reaction is nonlinear but smooth.
Because of the constraint, we deal with so-called {\em partial differential-algebraic equations} (PDAEs) which generalize the concept of differential-algebraic equations \cite{KunM06, LamMT13} to the infinite dimensional case.
Since these systems are known to be very sensitive (e.g.~with respect to perturbations), we propose to apply splitting methods that reduce the given system to a linear PDAE and a nonlinear ordinary differential equation (ODE).
With this, we only need to solve a nonlinear system in every spatial discretization point instead of a high-dimensional nonlinear PDAE.

%% order reduction
It has been observed in various situations that splitting methods suffer from an order reduction if, e.g., non-trivial boundary conditions are prescribed \cite{HanO09,EinO15,EinO16ppt}. % passt HanO09?
Since boundary conditions can be seen as a constraint on the dynamics, similar effects are expected for general constraints.
This can also be observed in numerical experiments.
To overcome this kind of order reduction, we introduce a modification similar as in \cite{EinO16ppt}.

%% details PDAEs
In this paper, we consider constrained systems of the form
\begin{subequations}
\begin{alignat*}{5}
  \dot u\ -\ &\calA u\ & -\ & f(u)\ & +\ \calD^- \lambda &=&\ \calF&, \\
  &\calD u& & & &=&\ \calG&
\end{alignat*}
\end{subequations}
on a time interval $[0,T]$ with consistent initial condition $u(0)=\uo$, i.e., $\calD \uo = \calG(0)$.
To enforce the constraint we use the Lagrangian method with multiplier $\lambda$.
As mentioned before, we take a splitting approach in order to eliminate the nonlinearity from the PDAE.
More precisely, we split the problem into a nonlinear unconstrained ODE and a linear PDAE of similar structure as the original problem.
On the time interval $[t_n, t_{n+1}]$ the two subsystems have the form
\begin{align*}
  \dot w_n = f(w_n)- q_n, \qquad
  w_n(0) = u_n,
\end{align*}
which is a nonlinear ODE (the reaction part only), and the linear system
\begin{align*}
  \dot v_n - \calA v_n + \calD^- \lambda = \calF_n + q_n, \qquad
  \calD v_n = \calG_n, \qquad
  v_n(0) = w_n(\tau).
\end{align*}
Note that $u_n$ denotes the approximation to the solution $u(t_n)$, whereas $v_n$ and $w_n$ denote the exact solutions of the two subsystems, and $\calF_n(s):=\calF(t_n+s)$, $\calG_n(s):=\calG(t_n+s)$.
We introduce a correction term $q_n$ which is zero for the classical Lie and Strang splitting but aims to maintain the expected convergence orders of those methods.
With a suitable correction we are able to obtain second-order convergence of Strang splitting.
Such a correction is necessary, since the outcome of the ODE is, in general, not consistent with the given constraint.

%% Contents
The paper is structured as follows.
In Section~\ref{sect_prelim} we provide the required assumptions on the considered PDAE system and present a number of examples with different kinds of constraints which fit into the given framework.
Moreover, we derive a variation-of-constants formula for linear PDAEs of the given structure which is the basis for the subsequent analysis.
The convergence analysis for Lie and Strang splitting are then given in Sections~\ref{sect_lie} and \ref{sect_strang}, respectively.
There we show that Lie splitting converges also without a correction term, whereas Strang splitting requires a correction to guarantee second-order convergence.
In Section~\ref{sect_numerics} we consider three numerical examples.
These include a coupled system of an elastic string and a nonlinear spring-damper system.
Finally, we conclude in Section~\ref{sect_conclusion}.
%
%
%=======================================================================================
%=========  Prelim
%=======================================================================================
\section{Preliminaries}\label{sect_prelim}  % Preliminaries
Let $X$ and $Q$ be two Banach spaces and consider a PDE of the form
\[
  \dot u - \calA u - f(u) = \calF
\]
with linear operator $\calA\colon D(\calA) \subseteq X \to X$ and a smooth nonlinearity $f$.
The system is constrained by
\[
 \calD u = \calG.
\]
Here, $\calD\colon X \to Q$ is the linear constraint operator which is assumed to have a right-inverse $\calD^-\colon Q \to X$.
% [comment] Ist $\calD$ nur densely defined, dann ist $\P$ auch nicht auf ganz $X$ definiert!
The right-hand sides are given functions $\calF\colon [0,T] \to X$ and $\calG\colon [0,T] \to Q$.
%
%% Operator DAE
We formulate the constrained PDE using the Lagrangian method, i.e., we add a Lagrange multiplier $\lambda\colon [0,T] \to Q$ which enforces the constraint and consider the system
\begin{subequations}
\label{eqn_opDAE}
\begin{alignat}{5}
  \dot u\ -\ &\calA u\ - &\phantom{i}f(u) & +\ \calD^- \lambda\ &=&\ \calF&, \label{eqn_opDAE_a}\\
  &\calD u& & &=&\ \calG&  . \label{eqn_opDAE_b}
\end{alignat}
\end{subequations}
Note that the first equation is given in $X$ while the second equation in formulated in the Banach space $Q$.
Equation \eqref{eqn_opDAE_a} includes the right-inverse of $\calD$ which enables the formulation of the dynamics in $X$ although it is constrained to a subspace.
Note that we consider here the right-inverse in place of the dual operator of $\calD$, which is used when working in the weak setting.

%% Semigroup Setting
We call such a system a partial differential-algebraic equation, since it generalizes both the concept of a classical DAE and of a PDE.
However, we consider here the semigroup setting in contrast to the weak setting used, e.g., in \cite{EmmM13, LamMT13, Alt15} that corresponds to the weak formulation of the underlying PDE.
%These systems are then called {\em abstract} or {\em operator DAEs}.
We assume that $\calA$ generates an analytic semigroup on the kernel of $\calD$, cf.~Section~\ref{sect_prelim_ass} for the precise assumptions.
The given setting includes the following example.
Further examples with more details are given in Section~\ref{sect_prelim_exp}.
\begin{example}[weighted integral mean]
\label{exp_integralMean}
Consider the semilinear heat equation, i.e., a parabolic PDE with a polynomial nonlinearity, subject to an additional constraint on the integral of the solution.
On the domain $\Omega = (0,1)$ the constraint operator $\calD$ is given by
\[
  \calD\colon X = L^2(\Omega) \to Q = \R, \qquad
  \calD v = \int_0^1 v(x) \sin(\pi x) \dx.
\]
The right-hand side $\calG\colon [0,T] \to \R$ equals the prescribed mean value.
The overall system may have the form
\[
  \dot u - \Delta u - u^2 + \calD^- \lambda = 0, \qquad %\calF
  \calD u = \calG.
\]
In this case, the operator $\calA$ corresponds to the Laplacian (with homogeneous Dirichlet boundary conditions) and $f(u)=u^2$ is a polynomial nonlinearity.
\end{example}
%
%
%=======================================================================================
%=======================================================================================
%=========  Assumptions
%=======================================================================================
%=======================================================================================
\subsection{Assumptions}\label{sect_prelim_ass}
We consider an open and bounded domain $\Omega\subseteq \R^d$ with smooth boundary.
Furthermore, the involved operators and right-hand sides should satisfy the following six conditions.
\begin{itemize}[itemsep=4pt]
  %
  % nonlinearity
  \item[(A1)] The nonlinearity $f\colon X\to X$ is two times Fr{\'{e}}chet differentiable and system \eqref{eqn_opDAE} has a unique and bounded solution $u$ on $[0, T]$.
   Moreover, the composition of $f$ with the exact solution satisfies $f(u)\in D(\calA)$.
  %
  % initial data
  \item[(A2)] The initial condition is consistent, i.e., $\calD \uo = \calG(0)$ and $\uo \in D(\calA)$.
  %
  % operator D
  \item[(A3)] The constraint operator $\calD\colon X \to Q$ is linear, onto, and its kernel $X_0 := \ker \calD$ is a closed subspace of $X$.
  \item[(A4)] There exists a right-inverse $\calD^-\colon Q \to X$ such that $\calD \calD^- q = q$ for all $q\in Q$.
\end{itemize}
The existence of a right-inverse implies that $X_0$ is a complemented subspace with projection $\P := \id - \calD^-\calD\colon X\to X_0$ and complement $X^c := \ker \P = \range \calD^-$.
%
% proof:
This can be seen as follows.
$\P$ is a projection on $X_0$, since $\calD \P x = \calD x - \calD x = 0$ and $\P^2=\P$.
Furthermore, the right-inverse maps to $X^c$, since
\[
  \P \calD^- q
  = \calD^- q - \calD^- q
  = 0.
\]
Thus, the choice of the right-inverse determines $X^c$ as well as the projection $\P$.
In the Hilbert space setting, we may choose $X^c$ as the orthogonal complement of $X_0$.
In any case, we have the decomposition of $X$ into $X = X_0 \oplus X^c$.
\begin{itemize}[itemsep=4pt]
  \item[(A5)] The right-hand side $\calF\colon [0,T] \to X$ is continuous in time and $\calG\colon [0,T] \to Q$ is Lipschitz continuous and satisfies in addition $\calD^-\calG \in D(\calA)$.
  %
  % operator A
  \item[(A6)] Given $\calA\colon D(\calA)\subseteq X\to X$ and its restriction $\calA|_{X_0}\colon D(\calA)\cap X_0 \to X$, we assume that $\calAo:= \P \calA|_{X_0}\colon D(\calA)\cap X_0 \to X_0$ generates an analytic semigroup.
\end{itemize}
\begin{remark}
\label{rem_A}
We do not assume that $\calA$ itself generates a semigroup.
However, if $\calA\colon D(\calA)\subseteq X \to X$ generates an analytic semigroup (e.g.~the Laplacian with homogeneous boundary conditions), then it is sufficient that $X_0$ is an {\em invariant subspace of $\calA$}, meaning that $\calA\colon D(\calA)\cap X_0 \to X_0$, cf.~\cite[Chap.~4.5]{Paz83}. % Sect 4.5, Th. 5.5 (Co-semigroup).
\end{remark}
One important consequence of $\calAo$ being an analytic semigroup is the {\em parabolic smoothing property}, i.e., there exists a positive constant $C$ such that
\begin{align}
\label{eqn_smoothing}
  \Vert \ee^{t \calAo} \calAo \Vert
  \le \frac C t, \qquad
  0< t \le T.
\end{align}
%
%
%=======================================================================================
%=======================================================================================
%=========  Beispiele
%=======================================================================================
%=======================================================================================
\subsection{Examples}\label{sect_prelim_exp}
We give a number of examples which satisfy the assumptions given in the previous subsection.
Three of these examples are then part of the numerical experiments in Section~\ref{sect_numerics}.
%
%=======================================================================================
\subsubsection{Weighted integral mean}\label{sect_prelim_exp_int}
We reconsider Example~\ref{exp_integralMean} with $\Omega=(0,1)$, $X = L^2(\Omega)$, $Q=\R$, and the constraint operator $\calD v = \int_0^1 v(x) \sin(\pi x) \dx$.
In the example, the operator $\calA$ corresponds to the Laplacian with homogeneous boundary conditions.
Thus, we consider the domain $D(\calA) = H^2(\Omega)\cap H^1_0(\Omega)$ for which it is known that $\calA$ generates an analytic semigroup \cite[Sect.~3.1]{Lun95}.
The kernel of the constraint operator is given by
\[
  X_0
  = \ker \calD
  = \big\{ v \in L^2(\Omega)\ \big|\ \int_0^1 v(x) \sin(\pi x) \dx = 0 \big\}.
\]
Note that $X_0$ is an invariant subspace of $\calA$, since $u\in D(\calA) \cap X_0$ implies
\[
  \calD \calA u
  = \int_0^1 u''(x) \sin(\pi x) \dx
  %= - \pi \int_0^1 u'(x) \cos(\pi x) \dx + 0-bdd
  = - \pi^2 \int_0^1 u(x) \sin(\pi x) \dx
  = 0
\]
and thus $\calA u \in X_0$.
This implies that $\calAo := \calA|_{X_0}$ generates a semigroup, cf.~Remark~\ref{rem_A}.
%Note that the weight function may be replaced by any other eigenfunction of $\calA$.
The corresponding numerical experiment can be found in Section~\ref{sect_numerics_int}.

If $\calA$ corresponds to the Laplacian with periodic boundary conditions, % todo: citation for semigroup
then we may also consider the constraint operator $\calD v = \int_0^1 v(x) \dx$, since $u\in D(\calA) \cap X_0$ would then imply
\[
  \calD \calA u
  = \int_0^1 u''(x) \dx
  = u'(1) - u'(0)
  = 0.
\]
%
%
%=======================================================================================
\subsubsection{Specification on a subset}\label{sect_prelim_exp_obs}
We consider the same operator $\calA$ and nonlinearity $f$ as before and add the constraint that the solution $u$ is prescribed on a closed subset $\Omega_0 \subseteq \Omega \subseteq \R^d$ of positive measure.
With $X = L^2(\Omega)$ and $Q = L^2(\Omega_0)$ the constraint operator has the form
\[
  \calD\colon X \to Q, \qquad
  \calD u = u|_{\Omega_0}.
\]
Its kernel is given by
\[
  X_0 = \big\{ u \in L^2(\Omega)\ \big|\ u|_{\Omega_0} =0 \big\}.
\]
Clearly, $X_0$ is an invariant subspace of $\calA$ such that $\calAo := \calA|_{X_0}$ generates a semigroup, cf.~Remark~\ref{rem_A}.
% Consider $u \in D(\calA) \cap X_0$.
% Since $u$ is constant on $X_0$, the Laplacian of $u$ remains zero on $\Omega_0$ and therefore $\calA u \in X_0$.
For the numerical results of this example we refer to Section~\ref{sect_numerics_obs}.
%
%
%=======================================================================================
\subsubsection{Constraint of Stokes-type}\label{sect_prelim_exp_div}
The constraint operator $\calD$ may also be a differential operator such as the divergence operator $\ddiv\colon [H^1_0(\Omega)]^3 \to L^2_0(\Omega)$.
Such a constraint appears in the Navier--Stokes as well as in Maxwell's equations.

For the Stokes equations on a bounded domain $\Omega \subseteq \R^3$ with smooth boundary the semigroup property has been shown in \cite{FujK64}, see also \cite[Sect.~3.8]{Hen81}.
% FujK64: bounded domain with smooth boundary, hom Dirichlet data, dann gilt X_0 = ... - aber semigroup wird nicht direkt gesagt
For domains with Lipschitz boundary we refer to \cite{Tay00}.
We consider $X=[L^2(\Omega)]^3$.
The subspace $X_0$ contains the divergence-free functions in $X$, i.e.,
\[
  X_0
  %= \overline{\{ u\in C^1(\Omega)^3\ |\ \ddiv u=0,\ u_n|_{\partial\Omega} = 0 \}}^X
  = \big\{ u\in [L^2(\Omega)]^3\  \big|\ \ddiv u=0,\ u\cdot n|_{\partial\Omega} = 0 \big\}
\]
with outward normal $n$.
Note that $X_0$ is a closed and proper subspace of $X$.
The domain of $\calAo := -\P \Delta$ which generates an analytic semigroup is given by $D(\calAo) = [H^2(\Omega)]^3 \cap [H^1_0(\Omega)]^3 \cap X_0$.

Since the constraint operator in this form is not defined on the entire space $X$ as assumed in (A3) of Section~\ref{sect_prelim_ass}, we have to consider the divengence in a weaker sense.
For this, consider $\calD=\ddiv\colon X = [L^2(\Omega)]^3 \to [H^1(\Omega)]^*$ with
\[
  \big\langle \ddiv u, v \big\rangle
  := - \int_\Omega u \cdot \nabla v \dx
  \qquad\text{for all } v\in H^1(\Omega).
\]
The kernel of this operator is exactly $X_0$, since $\langle \ddiv u, v \rangle = 0$ for all $v\in H^1(\Omega)$ implies $u\in X_0$
\cite{You13}.
Note, however, that the choice of the weak divergence operator influences the ansatz space for the Lagrange multiplier.
We emphasize that the given setting does not include the Navier--Stokes equations because of the involved nonlinearity.
%
%=======================================================================================
\subsubsection{Coupled mechanical system}\label{sect_prelim_exp_spring}
We consider an example from structural dynamics, namely a flexible string (fixed at its endpoints) which is coupled with a nonlinear spring-damper system, cf.~Figure~\ref{fig_spring}.
The elastic string is modelled in $\Omega=(0,1)$ by the system
\begin{align*}
  &\ddot u = c \Delta u - d_1 \dot u (1+\dot u), \\
  &u(0,t) = u(1,t) = 0, \\ %\qquad
  &u(0) = \uo,\ \dot u(0) = v_0
\end{align*}
with $c>0$ and damping parameter $d_1 \ge 0$. %  in [EngN00] ist c_2 > 0
Note that this string model has a linear stiffness but a nonlinear damping term.
The spring-damper system on the other hand is modelled by
\begin{align*}
  &\ddot q + d_2 \dot q + k(q) = 0, \\    %\qquad
  &q(0) = q_0,\ \dot q(0) = p_0
\end{align*}
with $d_2\ge 0$ and the nonlinearity $k(q)= k_0(1 - a^2q^2)\, q$.
For this so-called {\em softening spring} we assume $|aq|<1$, i.e., only small amplitudes.
\begin{figure}[h!]
\begin{center}
    \includegraphics{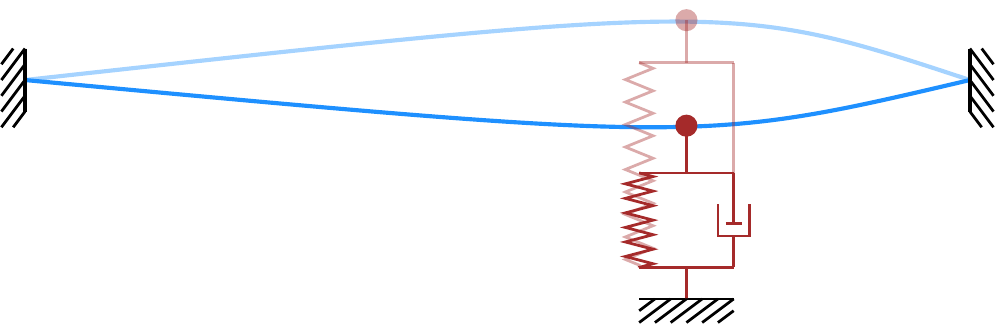}
	\end{center}
\caption{Illustration of the coupling of an elastic string with a spring-damper system.}
\label{fig_spring}
\end{figure}

%% First-order formulation
We write the two systems in their first-order formulation which leads to
\[
  \begin{bmatrix} \dot u \\ \dot v \end{bmatrix}
  = \underbrace{\begin{bmatrix} 0 & I \\ c \Delta & -d_1 I \end{bmatrix}}_{\calA_1}
     \begin{bmatrix} u \\  v \end{bmatrix}
    + \underbrace{\begin{bmatrix} 0 \\  -d_1 v^2 \end{bmatrix}}_{f_1(v)}, \quad
  \begin{bmatrix} \dot q \\ \dot p \end{bmatrix}
   %= \calA_2  \begin{bmatrix} q \\  p \end{bmatrix} + f_2(q,p)
  = \underbrace{\begin{bmatrix} 0 & 1 \\ 0 & -d_1 \end{bmatrix}}_{\calA_2}
     \begin{bmatrix} q \\  p \end{bmatrix}
     + \underbrace{\begin{bmatrix} 0 \\  -k(q) + (d_1-d_2)p \end{bmatrix}}_{f_2(q,p)} .
\]
Note that we include the linear term $(d_1-d_2)p$ in the nonlinearity $f_2$ in order to obtain the invariance of $X_0$ later.
In addition, we choose the spaces
\[
  X_1 = H^1_0(\Omega) \times L^2(\Omega), \qquad
  D(\calA_1) = \big( H^2(\Omega)\cap H^1_0(\Omega) \big) \times H^1_0(\Omega), \qquad
  X_2 = \R^2
\]
which imply that $\calA_1$ generates a semigroup \cite[Chap.~VI.3]{EngN00}.
% [comment] EngN00: hier steht nur exponentially stable semigroup
% Example 3.20: damped vibrating string 1D
Also $\calA_2$ generates a semigroup, since it corresponds to a linear ODE.

%% Constraint
For the coupling of the two subsystems we would like to connect the string at some point $x_c \in (0,1)$ to the spring-damper system, i.e., ask for $u(x_c, t) = q(t)$.
In order to avoid point constraints, we replace this condition by
\[
  \calB u(t) := u(t)|_{\Omega_c} \equiv q(t),
\]
i.e., the spring is connected to the elastic string along a subset $\Omega_c \subseteq (0,1)$.
This subset may be a small neighbourhood of $x_c$.
%% Lagrange
Including this constraint by means of a Lagrange multiplier, we do not obtain a system of the form \eqref{eqn_opDAE}.
This is caused by the fact that constrained mechanical systems (in the finite-dimensional case) lead to DAEs of index 3 \cite{Sim98} whereas system \eqref{eqn_opDAE} is of index-2 structure.
This means that a spatial discretization of such a system will lead (under certain conditions) to a DAE of index 2 \cite{Alt15}.
For an introduction to the index concept for DAEs, we refer to \cite{Meh15}.

%% Constraint Operator
To overcome this modelling issue, we extend the constraint by the so-called {\em hidden constraint}, i.e., we also enforce $\calB v(t) \equiv p(t)$.
In total, this leads to the constraint operator
\[
  \calD\colon X_1 \times X_2 \to L^2(\Omega_c)^2, \qquad
  \calD \begin{bmatrix} u & v & q & p \end{bmatrix}^T
  := \begin{bmatrix} \calB u  - q \one \\  \calB v - p \one \end{bmatrix}
\]
with $\one\in L^2(\Omega_c)$ being the function which is constant with value one.
The overall system then has the form
\begin{align*}
  \dot z
  = \begin{bmatrix} \calA_1 & 0 \\ 0 & \calA_2 \end{bmatrix} z
     +  \calD^- \begin{bmatrix} \lambda \\ \mu \end{bmatrix}
     +  \begin{bmatrix} f_1(v) \\  f_2(q,p) \end{bmatrix}, \qquad
  \calD z = \calG.
\end{align*}
We denote the new linear operator by $\calA$.
It remains to check whether the coupled system fits into the given framework.
For this, we show again that the space $X_0$ is invariant under $\calA$, cf.~Remark~\ref{rem_A}.
Consider any $z_0 \in X_0 = \ker \calD$, i.e., its components satisfy $\calB u_0 = q_0 \one$ and $\calB v_0 = p_0 \one$.
This implies that $u_0$ is constant on $\Omega_c$ and thus, $\calB \Delta u_0 \equiv 0$.
As a result, $z =\ee^{\tau\calAo} z_0$ satisfies $\calD z = 0$ as well and therefore $z\in X_0$.

Numerical results for this example are given in Section~\ref{sect_numerics_spring}.
%
%
%=======================================================================================
\subsubsection{Dirichlet boundary conditions}
Finally, we comment on the inclusion of boundary conditions in the given setting.
As shown in \cite{Alt15}, also Dirichlet boundary conditions may be included in form of a constraint.
However, this requires that function evaluations at the boundary are well-defined (in the sense of traces) and thus, $X=H^1(\Omega)$.
In this case, the constraint operator equals the trace operator with kernel $X_0 = H^1_0(\Omega)$.

The inclusion of (even more general) boundary conditions in combination with splitting methods was already addressed in \cite{EinO15, EinO16ppt}.
There, the space $X=L^p(\Omega)$ is used together with a different formulation without Lagrange multipliers.
%
%
%=======================================================================================
%=======================================================================================
%=========  Operator DAE Solution Formula
%=======================================================================================
%=======================================================================================
\subsection{Solution formula}\label{sect_prelim_solFormula}
For the convergence analysis in Sections~\ref{sect_lie} and \ref{sect_strang} we make use of a variation-of-constants formula for constrained systems of the form \eqref{eqn_opDAE}.
A corresponding formula for operator DAEs in the weak setting is given in \cite{EmmM13}. %(Duhamel's principle for the weak setting)
In the present setting we consider the system without nonlinearity, i.e.,
\begin{subequations}
%\label{eqn_opDAE}
\begin{alignat*}{5}
  \dot u\ -\ &\calA u\ & +\ \calD^- \lambda\ &=&\ \calF&, \\ %\label{eqn_opDAE_a}\\
  &\calD u&  &=&\ \calG&, %\label{eqn_opDAE_b}
\end{alignat*}
\end{subequations}
where $\calAo:= \P \calA|_{X_0}$ generates an analytic semigroup, cf.~Section~\ref{sect_prelim_ass}.
Consider the unique decomposition of the solution $u\in X$ into
\[
  u = \uB + \uC, \qquad
  \uB \in X_0,\ \uC \in X^c.
\]
With the right-inverse $\calD^-$ applied to the constraint $\calD u = \calG$ we obtain
\[
  \uC(t)
  = \calD^- \calD u(t)
  =\calD^- \calG(t).
\]
Note that $\uC$ is unique but depends on the choice of the complement space $X^c$, i.e., on the choice of the right-inverse.
It remains to find $\uB$.
For this, we project equation \eqref{eqn_opDAE_a} onto the kernel of the constraint which gives
\[
  \duB - \P\calA (\uB + \uC) = \P\calF.
\]
Here we have used $\P\calD^- \lambda = 0$ which follows from $\calD^-\colon Q \to X^c$.
Thus, we obtain with $\calAo=\P\calA|_{X_0}$,
\[
  \duB - \calAo \uB = \P \big( \calA \uC + \calF \big)
\]
for which we can use the variation-of-constants formula for evolution equations, since $\calAo$ is assumed to generate an analytic semigroup.
For consistent initial data $u(0) = \uo$ the initial condition for $\uB$ has the form
\[
  \uB(0) = \uo - \calD^-\calG(0) \in X_0.
\]
Together with the assumptions $\uo\in D(\calA)$ and $\calD^-\calG(0) \in D(\calA)$ this implies $\uB(0)\in D(\calAo)$.
Combining the solution parts, we obtain the formula
\begin{equation}
\label{eqn_solution}
\begin{split}
  u(t)
  &= \calD^-\calG(t) + \uB(t)   \\
  &= \calD^-\calG(t) + \ee^{t\calAo} \big(  \uo - \calD^-\calG(0) \big)
    + \int_0^t \ee^{(t-s)\calAo} \P \Big( \calF(s) + \calA\calD^-\calG(s) \Big) \ds.
\end{split}
\end{equation}
\begin{remark}
\label{rem_inconsistent}
Because of the splitting, we also have to deal with inconsistent initial data $\uo$.
This means that $\uo - \calD^-\calG(0) \not\in X_0$.
In this case, we have to enforce consistency by using $\P \uo \in D(\calAo)$.
For $t>0$ this leads to the formula
\[
  u(t) = \calD^-\calG(t) + \ee^{t\calAo} \P  \uo
    + \int_0^t \ee^{(t-s)\calAo} \P \Big( \calF(s) + \calA\calD^-\calG(s) \Big) \ds.
\]
Note that this implies a discontinuity at $t=0$.
\end{remark}
%
%
%=======================================================================================
%=======================================================================================
%=========  Lagrange
%=======================================================================================
%=======================================================================================
\subsection{Lagrange multiplier}\label{sect_prelim_lagrange}
For the sake of completeness we also give the solution formula for the Lagrange multiplier although we are mainly interested in the variable $u$.
The application of the constraint operator $\calD$ to the first equation of the linear system yields
\[
  \lambda
  = \calD \calD^- \lambda
  = \calD \big( \calF - \dot u + \calA u \big)
  = \calD \big( \calF + \calA u \big) - \dot \calG.
\]
%
%Note that this equation only has a solution if $\calF - \dot u + \calA u \in X^c = \ker \P$ which follows from the evolution equation for $\uB$ and $\P \duC=0$.
Note that, since $\lambda$ is not a differential variable (in time), its approximation does not depend on previous values of the multiplier.

%=======================================================================================
%=========  Lie
%=======================================================================================
\section{Lie Splitting}\label{sect_lie}
Given the assumptions from the previous section, we analyse Lie splitting for constrained systems of the form \eqref{eqn_opDAE}.
We show first-order convergence and discuss the role of the introduced correction term.
%
%
%=======================================================================================
%=======================================================================================
%=========  Splitting & Correction
%=======================================================================================
%=======================================================================================
\subsection{Modified Lie splitting}\label{sect_lie_alg}
For its numerical solution we split the system
\begin{subequations}
\label{eqn_lie_gesamt}
\begin{alignat}{5}
  \dot u\ -\ &\calA u\ & -\ & f(u)\ & +\ \calD^- \lambda &=&\ \calF&, \label{eqn_lie_gesamt_a}\\
  &\calD u& & & &=&\ \calG&  \label{eqn_lie_gesamt_b}
\end{alignat}
\end{subequations}
with initial condition $u(0) = \uo$ into the following two subsystems.
First, a nonlinear ODE including the reaction term on the time interval $[t_n, t_{n+1}]$,
\begin{align}
\label{eqn_lie_A}
  \dot w_n = f(w_n) - q_n, \qquad
  w_n(0) = u_n
\end{align}
and second, a linear PDAE of the form
\begin{align}
\label{eqn_lie_B}
  \dot v_n - \calA v_n + \calD^- \lambda = \calF_n + q_n, \qquad
  \calD v_n = \calG_n, \qquad
  v_n(0) = w_n(\tau).
\end{align}
Here $q_n$ denotes a correction term that still has to be chosen.
With $q_n=0$ we obtain the classical Lie splitting.
Recall that $u_n$ denotes the approximation of $u$ at time $t=t_n$, whereas $v_n$ and $w_n$ are the exact solutions of the two subsystems.
In total, one step of Lie splitting is given by
\[
  u_{n+1}
  = \cL_\tau u_n
  := v_n(\tau).
\]
Note that the order of the two subsystems may also be reversed, cf.~the discussion in Section~\ref{sect_lie_rev} below.

It remains to define the correction $q_n$ with which we aim to solve the inconsistency of the initial data in subsystem \eqref{eqn_lie_B} -- at least up to a certain order.
For this, the correction must be smooth, more precisely $q_n(s) \in D(\calA)$.
With assumption (A1), this implies
\begin{align}
\label{eqn_lie_smoothness}
  q_n(s)-f(u(t_n+s)) \in D(\calA).
\end{align}
Note that even $f(u) \not\in D(\calA)$ would be possible as long as the choice of the correction ensures \eqref{eqn_lie_smoothness}.
Moreover, we suggest (although this is not necessary for Lie splitting) to choose a correction that satisfies
\begin{align}
\label{eqn_lie_qn}
  \calD q_n(0) &= \calD f(u(t_n)) +  O(\tau).
\end{align}
One possible choice of the correction satisfying this condition is given by
\begin{align}
\label{eqn_qn_choiceA}
  q_n(s)
  \equiv q_n(0)
  := f(u_n),
\end{align}
i.e., we simply insert the current numerical solution into the nonlinearity.
Since $u_n$ equals the exact solution up to order (at least) one, as we will show below, this implies
\[
  \calD q_n(0)
  = \calD f(u_n)
  = \calD f(u(t_n)) + O(\tau).
\]
A second possible choice for the correction term is given by
\begin{align}
\label{eqn_qn_choiceB}
  q_n(s)
  \equiv q_n(0)
  := f(\calD^-\calG(t_n)),
\end{align}
i.e., we consider only one part of the solution, namely $(I-\P)u(t_n) = \calD^-\calG(t_n) \in X^c$.
However, this choice satisfies \eqref{eqn_lie_qn} only for a special class of constrained systems, as the following lemma shows.
\begin{lemma}
\label{lem_qn}
If the constraint operator $\calD$ and the projection $\P\colon X\to X_0$ satisfy
\[
  \calD f(u)
  = \calD (I-\P) f(u)
  =\calD f( (I-\P) u ),
\]
i.e., the nonlinearity $f$ and the projection $I-\P$ commute under the action of $\calD$, then the choice given in \eqref{eqn_qn_choiceB} satisfies the condition \eqref{eqn_lie_qn} exactly.
\end{lemma}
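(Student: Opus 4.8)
The statement to prove is purely algebraic: under the hypothesis $\calD f(u) = \calD(I-\P)f(u) = \calD f((I-\P)u)$, the correction $q_n(s) \equiv f(\calD^-\calG(t_n))$ satisfies $\calD q_n(0) = \calD f(u(t_n)) + O(\tau)$ exactly (in fact with no error term). So the plan is simply to chase the definitions. First I would recall that $(I-\P) = \calD^-\calD$ by the very definition of $\P$ in assumption (A4), and that by consistency of the exact solution, $\calD u(t_n) = \calG(t_n)$. Combining these two facts gives $(I-\P)u(t_n) = \calD^-\calD u(t_n) = \calD^-\calG(t_n)$, which is the precise statement that the $X^c$-component of the exact solution at time $t_n$ is $\calD^-\calG(t_n)$ (this is already noted in Section~\ref{sect_prelim_solFormula}).

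Next I would apply the hypothesis with $u = u(t_n)$: the chain $\calD f(u(t_n)) = \calD f((I-\P)u(t_n)) = \calD f(\calD^-\calG(t_n))$ uses the identification just established in the argument of $f$. But $\calD f(\calD^-\calG(t_n))$ is exactly $\calD q_n(0)$ by the definition \eqref{eqn_qn_choiceB}. Hence $\calD q_n(0) = \calD f(u(t_n))$, which is \eqref{eqn_lie_qn} with the $O(\tau)$ term identically zero, and in particular the condition holds exactly.

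There is essentially no obstacle here; the only thing to be careful about is making the logical direction of the hypothesis explicit — we are reading the equality $\calD f(u) = \calD f((I-\P)u)$ from left to right with $u = u(t_n)$, so that the known identity $(I-\P)u(t_n) = \calD^-\calG(t_n)$ can be substituted into the argument of $f$ on the right-hand side. One could also remark that the commutation hypothesis is satisfied, for instance, when $\calD$ is linear and $f$ maps $X^c$ into $X^c$ (or more generally when $f$ respects the splitting $X = X_0 \oplus X^c$ in the weaker sense that its $\calD$-image only depends on the $X^c$-part), but that is a side comment rather than part of the proof. I would keep the write-up to three or four lines: invoke $(I-\P) = \calD^-\calD$, use consistency $\calD u(t_n)=\calG(t_n)$ to get $(I-\P)u(t_n) = \calD^-\calG(t_n)$, apply the hypothesis, and conclude $\calD q_n(0) = \calD f(u(t_n))$.
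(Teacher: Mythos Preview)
Your proposal is correct and follows essentially the same approach as the paper: both use the identity $(I-\P)u(t_n)=\calD^-\calG(t_n)$ for the exact solution together with the commutation hypothesis to obtain the one-line chain $\calD q_n(0)=\calD f(\calD^-\calG(t_n))=\calD f((I-\P)u(t_n))=\calD f(u(t_n))$. The only cosmetic difference is that the paper reads the chain starting from $\calD q_n(0)$ while you start from $\calD f(u(t_n))$.
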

\begin{proof}
We use the fact that $(I-\P) u(t) = \calD^-\calG(t)$ holds for the exact solution.
The simple calculation
\begin{align*}
  \calD q_n(0)
  = \calD  f( \calD^- \calG(t_n) )  				% definition of q_n
  %&= \calD  f( (I-\P) \calD^- \calG(t_n) ) \\	% \calD^- maps into X^c
  = \calD  f( (I-\P) u(t_n) )   		  				% u_2 = \calD^- \calG
  = \calD  (I-\P) f( u(t_n) )     	      			% assumption
   = \calD  f( u(t_n) )
\end{align*}
then shows the claim.
\end{proof}
\begin{remark}
The conditions of Lemma~\ref{lem_qn} are satisfied for the example given in Section~\ref{sect_prelim_exp_obs} where $f$ is polynomial and the constraint operator is given by $\calD u = u|_{\Omega_0}$ for a subset $\Omega_0 \subseteq \Omega$ of positive measure.
%
%However, the required assumption is not valid for the example with the weighted integral mean, cf.~Section~\ref{sect_prelim_exp_int}.
\end{remark}
%
%
%=======================================================================================
%=======================================================================================
%=========  Local Error
%=======================================================================================
%=======================================================================================
\subsection{Local error}\label{sect_lie_local}
In this subsection, we analyse the local error of Lie splitting, i.e., we compare the exact solution $u(t_{n+1})$ with $u_{n+1}$ given by one step of the Lie splitting with starting value $\tilde u_n := u(t_n)$.
The exact value is given by the variation-of-constants formula from Section~\ref{sect_prelim_solFormula}.
For this, we consider $f(u_n)$ as part of the right-hand side which leads to the implicit representation
\begin{align}
\label{eqn_exact_sln}
  u(t_{n+1})
  &= \calD^-\calG_n(\tau) + \ee^{\tau\calAo} \big( \tilde u_n - \calD^-\calG_n(0) \big)  \\
  &\qquad\qquad  + \int_0^\tau \ee^{(\tau-s)\calAo} \P \Big(\calF_n(s) + f(u(t_n+s)) + \calA\calD^-\calG_n(s) \Big) \ds. \notag
\end{align}
The solution of system \eqref{eqn_lie_A} in the time interval $[t_n, t_{n+1}]$ is given by
\[
  w_n(\tau)
  = w_n(0) + \int_0^\tau \Big( f(w_n(s)) - q_n(s) \Big) \ds
  = \tilde u_n + \tau \big( f(\tilde u_n) - q_n(0) \big) + O(\tau^2).
\]
Furthermore, the variation-of-constants formula yields for \eqref{eqn_lie_B} the representation
\[
  v_n(\tau)
  = \calD^-\calG_n(\tau) + \ee^{\tau\calAo} \P  w_n(\tau)
    + \int_0^\tau \ee^{(\tau-s)\calAo} \P \Big(\calF_n(s) + q_n(s) + \calA\calD^-\calG_n(s) \Big) \ds.
\]
Note that we have used here the formula from Remark~\ref{rem_inconsistent}, since we can not assume that the outcome of the nonlinear system \eqref{eqn_lie_A}, namely $w_n(\tau)$, is consistent with the constraint.
With $\tilde u_n - \calD^-\calG_n(0) = \P \tilde u_n$, the local error can be represented in the form
\begin{align*}
  \delta_{n+1}
  &= \cL_\tau \tilde u_n - u(t_{n+1}) \\
  &= \ee^{\tau\calAo} \P\big( w_n(\tau) -\tilde u_n \big)
       + \int_0^\tau \ee^{(\tau-s)\calAo} \P \Big( q_n(s) - f(u(t_n+s)) \Big) \ds + O(\tau^2) \\
  &= \tau \ee^{\tau\calAo} \P \big( f(\tilde u_n) - q_n(0) \big)
       + \int_0^\tau \ee^{(\tau-s)\calAo} \P \Big( q_n(s) - f(u(t_n+s)) \Big) \ds + O(\tau^2).
%  &= - \tau \ee^{\tau\calAo} \hat\psi(0)
%       + \int_0^\tau \ee^{(\tau-s)\calAo} \P \hat \psi(s) \ds + O(\tau^2)  \\
\end{align*}
With $\hat\psi(s) := q_n(s)-f(u(t_n+s))$ and $\psi(s) := \ee^{(\tau-s)\calAo} \P \hat \psi(s)$ the local error can be expressed in terms of a quadrature error in the following way
\begin{align*}
  \delta_{n+1}
%  = - \tau \ee^{\tau\calAo} \P\hat\psi(0)
%     + \int_0^\tau \ee^{(\tau-s)\calAo} \P \hat \psi(s) \ds + O(\tau^2)
  = - \tau \psi(0)
     + \int_0^\tau \psi(s) \ds + O(\tau^2).
\end{align*}
Note that Taylor expansion of $\psi$ yields the formula
\begin{align}
\label{eqn_lie_quadrature}
  \int_0^\tau \psi(s) \ds
  = \tau \psi(0) + \int_0^\tau \int_0^s \psi'(\xi) \dxi \ds.
\end{align}
In order to show the second-order property of the local error, it is sufficient to show the boundedness of $\psi'(\xi)$.
The derivative is given by
\[
  \psi'(\xi)
  = \ee^{(\tau-s)\calAo} \Big(-\calAo \P\hat\psi(s) + \P \hat\psi'(s) \Big).
\]
Note that $\hat\psi'(s)$ is bounded as well as $\calAo \P\hat\psi(s)$, since we have $\P\hat\psi(s) \in D(\calAo)$ by \eqref{eqn_lie_smoothness}.
In summary, we obtain $\delta_{n+1} = O(\tau^2)$ independent of the particular choice of the correction. % $q_n$.
%
%
%=======================================================================================
%=======================================================================================
%=========  Reverse Order
%=======================================================================================
%=======================================================================================
\subsection{Reversed order}\label{sect_lie_rev}
Because of the projection of the initial data to the constraint manifold, we actually did not need the correction $q_n$ within the error analysis of Lie splitting.
However, if we reverse the order of the two subsystems, the situation changes slightly.
For this, assume that we first solve the linear PDAE
\begin{align}
\label{eqn_rev_A}
  \dot v_n - \calA v_n + \calD^- \lambda = \calF_n + q_n, \qquad
  \calD v_n = \calG_n, \qquad
  v_n(0) =  u_n
\end{align}
and then the nonlinear ODE
\begin{align}
\label{eqn_rev_B}
  \dot w_n = f(w_n) - q_n, \qquad
  w_n(0) = v_n(\tau).
\end{align}
For the analysis of the local error we take as initial value $\tilde u_n = u(t_n)$.
As before, we approximate the solution of \eqref{eqn_rev_B} by
\[
 w_n(\tau)
  = w_n(0) + \int_0^\tau \Big(  f(w_n(s)) - q_n(s) \Big)  \ds
  = v_n(\tau) + \tau \big[ f(v_n(\tau)) - q_n(0) \big] + O(\tau^2)
\]
and $v_n(\tau)$ by the variation-of-constants formula.
For the local error this results in
\begin{align*}
  \delta^\text{rev}_{n+1}
  &= \cL^\text{rev}_\tau \tilde u_n - u(t_{n+1}) \\
%  &= w_n(\tau) - u(t_{n+1}) \\
  &= v_n(\tau) + \tau \big[ f(v_n(\tau)) - q_n(0) \big] - u(t_{n+1}) + O(\tau^2) \\
  &= \tau \big[ f(v_n(\tau)) - q_n(0) \big]
    + \int_0^\tau \ee^{(\tau-s)\calAo} \P \Big( q_n(s) -  f(u(t_n+s)) \Big) \ds   +  O(\tau^2)   \\
  &= -\tau \hat\psi(0) + \tau \big[ f(v_n(\tau)) - f( \tilde u_n) \big]
    + \int_0^\tau \ee^{(\tau-s)\calAo} \P \hat\psi(s) \ds   +  O(\tau^2).
\end{align*}
Using
\[
  \tau \hat\psi(0)
  = \tau (I-\P) \hat\psi(0) + \underbrace{\tau (I-\ee^{\tau\calAo}) \P\hat\psi(0)}_{=O(\tau^2)} +\, \tau \ee^{\tau\calAo} \P\hat\psi(0),
%  = \tau \ee^{\tau\calAo} \P\hat\psi(0) + \tau (I-\P) \hat\psi(0) + O(\tau^2)
\]
we split the local error into three parts.
% 1) quadratur
First, we have the quadrature error as in the previous subsection which is of second order.
% 2) projection
Second, we consider the projection error $\tau (I-\P) \hat\psi(0)$ for which we use the assumption on the correction $q_n$.
Applying the right-inverse $\calD^-$ to \eqref{eqn_lie_qn}, we obtain $(I-\P) \hat\psi(0) = O(\tau)$.
% 3) application of f
Finally, we need an estimate of the difference $v_n(\tau)-\tilde u_n$ for which we calculate
\[
  v_n(\tau)-\tilde u_n
  = (\ee^{\tau \calAo}-I) \big[ \tilde u_n - \calD^-\calG_n(0) \big] + O(\tau).
  %= O(\tau).
\]
% nutzt, dass \calD^-\calG_n(\tau) - \calD^-\calG_n(0) = O(tau)  UND integral = O(\tau), needs $\calA\calD^-\calG$ bdd !
%
For this, we have used that the integral term in the solution formula of $v_n(\tau)$ is of order $\tau$ as well as $\calG_n(\tau) - \calG_n(0) = O(\tau)$.
To show $v_n(\tau)-\tilde u_n = O(\tau)$, we need $\tilde u_n - \calD^-\calG_n(0) \in D(\calAo)$.
This is true, since $\tilde u_n$ is consistent such that $\tilde u_n - \calD^-\calG_n(0) \in X_0$ and $\calD^-\calG_n(0) \in D(\calA)$ by assumption (A5).
In summary, we obtain $\delta^\text{rev}_{n+1} = O(\tau^2)$ if assumption \eqref{eqn_lie_qn} on the correction is satisfied.
If this is not the case, then the calculation shows that
\[
  \delta^\text{rev}_{n+1}
  = \tau (I-\P) \hat\psi(0) + O(\tau^2).
\]
However, we will show that this still leads to a first-order method.
%
%
%=======================================================================================
%=======================================================================================
%=========  Global Error
%=======================================================================================
%=======================================================================================
\subsection{Global error}\label{sect_lie_global}
In this final part on Lie splitting we prove that the method is first-order convergent no matter which order of the subsystems has been chosen.
We show this independently of the choice of the correction $q_n$, as long as \eqref{eqn_lie_smoothness} is satisfied.
Recall that we have shown in the previous two subsections
\[
  \delta_{n+1} =  O(\tau^2), \qquad\quad
  \delta^\text{rev}_{n+1} = \tau (I-\P) \hat\psi(0) + O(\tau^2).
\]
With these achievements on the local errors we prove the following convergence result.
\begin{theorem}[Convergence of Lie splitting]
Under the assumptions (A1)-(A6) given in Section~\ref{sect_prelim_ass} and a correction $q_n$ satisfying $q_n(s)-f(u(t_n+s)) \in D(\calA)$, Lie splitting is convergent of first order.
\end{theorem}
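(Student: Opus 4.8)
The plan is to combine the local error bounds already obtained in Sections~\ref{sect_lie_local} and \ref{sect_lie_rev} with a Lady Windermere's fan argument (telescoping of the error) based on the stability of the numerical flow $\cL_\tau$. As a preliminary step I would record that the exact solution $u$ stays bounded on $[0,T]$ by (A1) and that, by a routine bootstrap using the local error estimates below, the numerical iterates $u_n$ remain in a fixed bounded subset of $X$; on that set $f$ is Lipschitz with a uniform constant $L$, $\Vert \ee^{t\calAo}\Vert\le M$ for $0\le t\le T$ by analyticity, and $\P$, $\calD^-$ are bounded. From this, the flow of the nonlinear ODE \eqref{eqn_lie_A}/\eqref{eqn_rev_B} is Lipschitz with constant $1+C\tau$ (Gronwall), while the flow of the linear PDAE \eqref{eqn_lie_B}/\eqref{eqn_rev_A} is \emph{affine}: by the formula of Remark~\ref{rem_inconsistent}, two of its outputs differ exactly by $\ee^{\tau\calAo}\P$ applied to the difference of the inputs, so this difference depends on the inputs only through their $\P$-components and is bounded by $M\Vert\P\Vert$ times the input difference. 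Composing the two (in either order), $\cL_\tau$ is Lipschitz with constant $1+C\tau$.

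Next I would unfold the global error $e_N:=u_N-u(t_N)$. Writing $e_{n+1}=\big(\cL_\tau u_n-\cL_\tau u(t_n)\big)+\delta_{n+1}$ and iterating gives $e_N=\sum_{n=0}^{N-1}\Pi_n\,\delta_{n+1}$, where $\Pi_n$ is the error-propagation map built from the differences of $\cL_\tau$ over steps $n+1,\dots,N-1$ (taken along segments joining the numerical and exact trajectories, so only the Lipschitz bounds are needed). The stability estimate yields $\Vert\Pi_n v\Vert\le(1+C\tau)^{N-1-n}\Vert v\Vert\le e^{CT}\Vert v\Vert$. For the forward ordering, Section~\ref{sect_lie_local} gives $\delta_{n+1}=O(\tau^2)$, hence $\Vert e_N\Vert\le e^{CT}\sum_{n=0}^{N-1}O(\tau^2)=O(\tau)$ because $N\tau\le T$; this is first-order convergence.

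For the reversed ordering the local error is only $\delta^{\mathrm{rev}}_{n+1}=\tau(I-\P)\hat\psi(0)+O(\tau^2)$, and the first term is merely $O(\tau)$ when the correction fails to satisfy \eqref{eqn_lie_qn}. The point is that $(I-\P)\hat\psi(0)\in X^c$ exactly (since $\P(I-\P)=0$) and that the very next step of $\cL^{\mathrm{rev}}_\tau$ starts with the linear PDAE, whose difference only sees the $\P$-part of its argument. Splitting $\delta^{\mathrm{rev}}_{n+1}=\xi_n+\rho_n$ with $\xi_n:=\tau(I-\P)\hat\psi(0)\in X^c$ and $\Vert\rho_n\Vert=O(\tau^2)$, one step of propagation annihilates $\xi_n$ and leaves only $\ee^{\tau\calAo}\P\rho_n=O(\tau^2)$ (followed by the $(1+C\tau)$-Lipschitz ODE step). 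Hence for $n\le N-2$ the term $\delta^{\mathrm{rev}}_{n+1}$ contributes $O(\tau^2)(1+C\tau)^{N-2-n}$ to $e_N$, which sums to $O(\tau)$; the single leftover term $n=N-1$ contributes $\delta^{\mathrm{rev}}_N=O(\tau)$ directly. Together with the $O(\tau)$ coming from the $\rho_n$ and from the $O(\tau^2)$ remainders, we get $\Vert e_N\Vert=O(\tau)$ uniformly for $N\tau\le T$, proving the theorem for both orderings.

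The main obstacle is making precise the claim that the first PDAE step ``forgets'' the $X^c$-component of the defect even though it is interleaved with the nonlinear ODE flow, i.e.\ that the propagation map $\Pi_n$ for $n\le N-2$ factors through one application of $\ee^{\tau\calAo}\P$ and therefore gains a full power of $\tau$ on the rough part $\xi_n$. This rests on the PDAE flow being affine, so that its difference is exactly linear and independent of the base point, which is available here; the fact that the nonlinear flow is only $(1+C\tau)$-Lipschitz (rather than affine) does not spoil the gained order. A secondary technicality is the uniform-in-$n$ boundedness of the iterates needed for the uniform Lipschitz constant of $f$, which follows by the usual induction: since the accumulated error is $O(\tau)$, the iterates stay in a neighbourhood of the bounded exact trajectory.
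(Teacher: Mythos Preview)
Your strategy is the same as the paper's: write a telescoping error recursion, use that the PDAE step is affine with difference map $\ee^{\tau\calAo}\P$, and observe that the dangerous $\tau(I-\P)\hat\psi(0)$ part of the reversed local error lies in $X^c=\ker\P$ and is therefore annihilated as soon as it passes through one PDAE step. The paper makes this explicit by writing $e_{n+1}=\ee^{\tau\calAo}\P e_n+\tau E_n+\delta^{\mathrm{rev}}_{n+1}+O(\tau^2)$ and then iterating; your Lady Windermere's fan is the same computation in integrated form.

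There is, however, one slip in your stability step. You claim that composing the ODE flow (Lipschitz constant $1+C\tau$) with the PDAE flow (difference map $\ee^{\tau\calAo}\P$, operator norm $\le M\Vert\P\Vert$) gives $\cL_\tau$ a Lipschitz constant $1+C\tau$. In general the product is $M\Vert\P\Vert(1+C\tau)$, and iterating this $N$ times would yield $(M\Vert\P\Vert)^N$, which blows up unless $M\Vert\P\Vert\le 1$. What saves the argument---and what the paper exploits by writing the recursion out---is the semigroup identity $(\ee^{\tau\calAo}\P)^k=\ee^{k\tau\calAo}\P$ (valid because $\ee^{\tau\calAo}$ maps $X_0$ to $X_0$), so the factor $M\Vert\P\Vert$ appears only once, not $k$ times. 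Concretely, from $d_{j+1}=\ee^{\tau\calAo}\P d_j+s_j$ with $\Vert s_j\Vert\le C\tau\Vert d_j\Vert$ one gets $d_m=\ee^{m\tau\calAo}\P d_0+\sum_j\ee^{(m-1-j)\tau\calAo}\P s_j$, and now the uniform bound $\Vert\ee^{t\calAo}\Vert\le M$ on $[0,T]$ plus discrete Gronwall give $\Vert d_m\Vert\le C(T)\Vert d_0\Vert$. With this fix your argument goes through exactly as you describe.
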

\begin{proof}
We consider Lie splitting in reversed order from Section~\ref{sect_lie_rev}, since the result is trivial if the local error is of second order.
Let $e_n:= u_n - u(t_n)$ denote the global error at time $t= t_n=n\tau$ where $u_n$ denotes the approximate solution $(\cL_\tau^\text{rev})^n \uo$.
Then, we obtain by the formulas for $v_n(\tau)$ and $w_n(\tau)$ %from Section~\ref{sect_lie_rev}
the error recursion
\begin{align*}
  e_{n+1}
  = \cL^\text{rev}_\tau u_n - u(t_{n+1})
  &= \cL^\text{rev}_\tau u_n - \big( \cL^\text{rev}_\tau u(t_n) - \delta^\text{rev}_{n+1} \big) \\
  &= \ee^{\tau\calAo} \P e_n + \tau \big[ f(v_n(\tau)) - f(\tilde v_n(\tau)) \big] + \delta^\text{rev}_{n+1} + O(\tau^2),
% Alt und falsch: &= \ee^{\tau\calAo} \P e_n + \tau \ee^{\tau\calAo} \P \big( f(u_n) - f(u(t_n)) \big) + \delta^\text{rev}_{n+1} + O(\tau^2).
\end{align*}
where $v_n$ and $\tilde v_n$ denote the solutions of \eqref{eqn_rev_A} with initial data $u_n$ and $\tilde u_n =u(t_n)$, respectively.
Moreover, the Lipschitz continuity of $f$ yields
\[
  \big\Vert f(v_n(\tau)) - f(\tilde v_n(\tau)) \big\Vert
  \le C\, \Vert v_n(\tau) - \tilde v_n(\tau) \Vert
  = C\,   \Vert \ee^{\tau\calAo} \P e_n \Vert.
\]
Note that the local errors $\delta^\text{rev}_k$ for $k\le n$ only occur with the projection $\P$ so that the first-order error terms vanish in this case.
The application of a discrete Gronwall argument then yields the claim.
\end{proof}
\begin{remark}
We emphasize once more that we do not require \eqref{eqn_lie_qn} in the convergence result.
However, numerical experiments show that a correction satisfying \eqref{eqn_lie_qn} may lead to better results,
especially for the reversed Lie splitting, see Section~\ref{sect_numerics}.
Furthermore, since we have not used the parabolic smoothing property \eqref{eqn_smoothing}, the convergence result is also true for strongly continuous semigroups.
\end{remark}

%=======================================================================================
%=========  Strang
%=======================================================================================
\section{Strang Splitting}\label{sect_strang}
This section is devoted to the convergence of Strang splitting for constrained diffusion-reaction systems.
For this, we still consider the assumptions from Section~\ref{sect_prelim_ass} which include that the operator $\calAo := \P \calA|_{X_0}$ generates an analytic semigroup.
As for the Lie splitting we introduce a correction term $q_n$.
However, in contrast to the previous section, we need $q_n$ as defined in \eqref{eqn_lie_qn} in order to guarantee the second-order convergence property.
%
%
%=======================================================================================
%=======================================================================================
%=========  Splitting & Correction
%=======================================================================================
%=======================================================================================
\subsection{Modified Strang splitting}\label{sect_strang_alg}
We consider once more system \eqref{eqn_lie_gesamt} with initial condition $u(0)= \uo$.
To find an approximation of $u$ in $[t_n, t_{n+1}]$, we consider the following three subsystems.
First, we solve a linear PDAE on the interval $[0, \ttau]$, namely
\begin{align}
\label{eqn_strang_A}
  \dot v_n - \calA v_n + \calD^- \lambda = \calF_n + q_n, \qquad
  \calD v_n = \calG_n, \qquad
  v_n(0) = u_n.
\end{align}
As before, $q_n$ denotes the correction term of the current splitting step.
Second, we take $v_n(\ttau)$ as initial value for the nonlinear ODE including the reaction term on the time interval $[0, \tau]$, i.e.,
\begin{align}
\label{eqn_strang_B}
  \dot w_n = f(w_n) - q_n, \qquad
  w_n(0) = v_n(\ttau).
\end{align}
Finally, we consider again the linear PDAE but now in the time domain $[\ttau, \tau]$ and with an initial value coming from \eqref{eqn_strang_B},
\begin{align}
\label{eqn_strang_C}
  \dot v_n - \calA v_n + \calD^- \lambda = \calF_n + q_n, \qquad
  \calD v_n = \calG_n, \qquad
  v_n(\ttau) = w_n(\tau).
\end{align}
For the correction we assume, as in the previous section, \eqref{eqn_lie_smoothness} as well as \eqref{eqn_lie_qn}, cf.~the discussion on the definition of $q_n$ in Section~\ref{sect_lie_alg}.
This choice then ensures that Strang splitting is of second order.
For an estimate of the local error, we follow the procedure given in \cite{EinO15, EinO16ppt}.
%
%
%=======================================================================================
%=======================================================================================
%=========  Local Error
%=======================================================================================
%=======================================================================================
\subsection{Local error}\label{sect_strang_local}
We analyse the error of one splitting step with (exact) initial value $v_n(0) = \tilde u_n := u(t_n)$.
Note that this implies that the value is consistent with the constraint.
The solution of system \eqref{eqn_strang_A} is given by the variation-of-constants formula of Section~\ref{sect_prelim_solFormula},
\[
  v_n(\ttau)
  = \calD^-\calG_n(\ttau) + \ee^{\frac \tau 2\calAo} \big( \tilde u_n - \calD^-\calG_n(0) \big)
     + \int_0^{\tau/2} \ee^{(\frac\tau 2-s)\calAo} \P\Big( \calF_n(s) + q_n(s) + \calA\calD^-\calG_n(s) \Big) \ds.
\]
The solution of the nonlinear system \eqref{eqn_strang_B} satisfies
\[
  w_n(\tau)
  = v_n(\ttau) + \int_0^\tau \dot w_n(s) \ds
  = v_n(\ttau) + \tau \big[ f(w_n(\ttau)) - q_n(\ttau) \big] + O(\tau^3).
\]
This is seen by applying the midpoint rule to the integral.
Before we consider the local error of Strang splitting, we give a preparatory lemma concerning the difference of $w_n(\ttau)$ and $u(t_n+\ttau)$.
\begin{lemma}[cf.~{\cite[Lem.~4.6]{EinO16ppt}}]
\label{lem_wn_un}
In addition to the assumptions (A1)-(A6) in Section~\ref{sect_prelim_ass} we assume that the correction $q_n$ satisfies  \eqref{eqn_lie_smoothness} and \eqref{eqn_lie_qn}.
Then, we get the estimate
\[
  \Vert w_n(\ttau) - u(t_n+ \ttau) \Vert \le C\, \tau^2.
\]
\end{lemma}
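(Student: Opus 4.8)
The plan is to compare $w_n(\ttau)$ and $u(t_n+\ttau)$ through the variation-of-constants formula of Section~\ref{sect_prelim_solFormula}. I would apply it once to the linear subsystem \eqref{eqn_strang_A} to express $v_n(\ttau)$, and once to the full system \eqref{eqn_opDAE} on $[t_n,t_n+\ttau]$ with $f(u)$ treated as part of the inhomogeneity to express $u(t_n+\ttau)$. Since $v_n(0)=\tilde u_n=u(t_n)$ is consistent, the semigroup term and the $\calD^-\calG_n$ term in the two formulas coincide, and with the shorthand $\hat\psi(s):=q_n(s)-f(u(t_n+s))$ the subtraction leaves
\[
  v_n(\ttau)-u(t_n+\ttau)=\int_0^{\tau/2}\ee^{(\frac\tau 2-s)\calAo}\,\P\,\hat\psi(s)\ds .
\]
A first-order Taylor expansion of the nonlinear flow \eqref{eqn_strang_B} (rectangle rule) gives $w_n(\ttau)=v_n(\ttau)+\ttau\,\big[f(v_n(\ttau))-q_n(0)\big]+O(\tau^2)$, and since $\tilde u_n$ is consistent the estimate $v_n(\ttau)-\tilde u_n=O(\tau)$ — obtained as in Section~\ref{sect_lie_rev}, using $\tilde u_n-\calD^-\calG_n(0)\in D(\calAo)$ — together with the Lipschitz continuity of $f$ yields $f(v_n(\ttau))=f(\tilde u_n)+O(\tau)$. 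Combining the three ingredients,
\[
  w_n(\ttau)-u(t_n+\ttau)=\int_0^{\tau/2}\ee^{(\frac\tau 2-s)\calAo}\,\P\,\hat\psi(s)\ds-\ttau\,\hat\psi(0)+O(\tau^2).
\]

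Next I would split $\ttau\,\hat\psi(0)=\ttau\,(I-\P)\hat\psi(0)+\ttau\,\P\,\hat\psi(0)$. Applying the right-inverse $\calD^-$ to the correction condition \eqref{eqn_lie_qn} gives $(I-\P)\hat\psi(0)=\calD^-\calD\,\hat\psi(0)=O(\tau)$, so the first piece is $O(\tau^2)$. For the remaining expression $\int_0^{\tau/2}\ee^{(\frac\tau 2-s)\calAo}\P\hat\psi(s)\ds-\ttau\,\P\hat\psi(0)$ I would Taylor-expand $\P\hat\psi(s)=\P\hat\psi(0)+\int_0^s\P\hat\psi'(\xi)\dxi$; the contribution of the integral remainder is $O(\tau^2)$ because $\hat\psi'$ is bounded by (A1) and the semigroup is uniformly bounded on $[0,T]$, while the leading term becomes $\int_0^{\tau/2}\big(\ee^{(\frac\tau 2-s)\calAo}-I\big)\P\hat\psi(0)\ds$, where one uses $\P\hat\psi(0)\in D(\calAo)$ — precisely the smoothness assumption \eqref{eqn_lie_smoothness} — to estimate $\big\Vert(\ee^{(\frac\tau 2-s)\calAo}-I)\P\hat\psi(0)\big\Vert\le C\,(\ttau-s)\,\Vert\calAo\P\hat\psi(0)\Vert$ and integrate over $s\in[0,\ttau]$, obtaining again $O(\tau^2)$. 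Adding up all contributions gives the asserted bound.

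The main obstacle is that the integral $\int_0^{\tau/2}\ee^{(\frac\tau 2-s)\calAo}\P\hat\psi(s)\ds$ is, by itself, only $O(\tau)$: it is the integral of a merely bounded quantity over an interval of length $\ttau$. The $O(\tau^2)$ estimate therefore rests on a genuine cancellation in which the two assumptions on the correction $q_n$ play complementary, precisely calibrated roles: the first-order term $-\ttau\,\hat\psi(0)$ produced by the nonlinear flow must cancel the leading part of this integral, where condition \eqref{eqn_lie_qn} removes its $(I-\P)$ component while condition \eqref{eqn_lie_smoothness}, i.e.\ $\P\hat\psi(0)\in D(\calAo)$, upgrades $(\ee^{t\calAo}-I)\P\hat\psi(0)$ from $O(1)$ to $O(t)$ and thus supplies the extra power of $\tau$ (unlike in the Lie case, neither assumption can be dropped). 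A smaller technical point to handle with care is the justification of $v_n(\ttau)-\tilde u_n=O(\tau)$, which needs $\tilde u_n-\calD^-\calG_n(0)\in D(\calAo)$; this holds because $\tilde u_n$ is consistent, so $\tilde u_n-\calD^-\calG_n(0)\in X_0$, and $\calD^-\calG_n(0)\in D(\calA)$ by (A5), exactly as in Section~\ref{sect_lie_rev}.
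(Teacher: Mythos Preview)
Your argument is correct and, at the level of the actual estimates, coincides with what the paper does: the decomposition of $w_n(\ttau)-u(t_n+\ttau)$ into the integral term $\int_0^{\tau/2}\ee^{(\frac\tau2-s)\calAo}\P\hat\psi(s)\ds$ minus $\ttau\,\hat\psi(0)$, the splitting into $(I-\P)$ and $\P$ parts, and the use of \eqref{eqn_lie_qn} and \eqref{eqn_lie_smoothness} to kill the respective first-order pieces --- all of this is exactly the computation carried out in Section~\ref{sect_lie_rev} for the local error $\delta^{\text{rev}}_{n+1}$ of the reversed Lie splitting.

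The paper's proof differs only in that it does not repeat this computation. It makes the structural observation that applying the linear PDAE step on $[0,\ttau]$ followed by the nonlinear ODE step on $[0,\ttau]$ is precisely one step of the reversed Lie splitting $\cL^{\text{rev}}_{\tau/2}$, so $w_n(\ttau)=\cL^{\text{rev}}_{\tau/2}\tilde u_n$ and the lemma follows in one line from the local error bound already established in Section~\ref{sect_lie_rev}. Your direct computation is a faithful unrolling of that argument; what you miss is this shortcut, which makes the role of the two hypotheses on $q_n$ transparent (they are exactly what was needed there to upgrade $\delta^{\text{rev}}$ from $O(\tau)$ to $O(\tau^2)$) and avoids any duplication.
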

\begin{proof}
Since $w_n(\ttau)$ equals the outcome of the reversed Lie splitting with step size $\ttau$, we can use the previous proof.
Thus, assuming that \eqref{eqn_lie_qn} is satisfied, we obtain
\[
  w_n(\ttau) - u(t_n+ \ttau)
  = \cL^\text{rev}_{\tau/2} \tilde u_n - u(t_n+ \ttau)
  = O(\tau^2).
  \qedhere
\]
\end{proof}
For the local error of the splitting, we apply the variation-of-constants formula to system \eqref{eqn_strang_C}.
Since the initial value $w_n(\tau)$ will be inconsistent, in general, we use its projection on $X_0$, cf.~Remark~\ref{rem_inconsistent}.
We then obtain
\begin{align*}
  \calS_\tau \tilde u_n   % nicht das gleiche wie u_{n+1}!
  &= v_n(\tau)\\
  &= \calD^-\calG_n(\tau) + \ee^{\frac \tau 2\calAo} \P w_n(\tau)
     + \int_{\tau/2}^\tau \ee^{(\tau - s)\calAo} \P \Big(\calF_n(s) + q_n(s) + \calA\calD^-\calG_n(s) \Big) \ds \\
  &= \calD^-\calG_n(\tau) + \ee^{\tau\calAo} \big( \tilde u_n - \calD^-\calG_n(0) \big)
     + \tau \ee^{\frac \tau 2\calAo} \P \big(  f(w_n(\tfrac \tau 2)) - q_n(\tfrac \tau 2) \big) \\
     &\qquad\qquad\qquad\quad + \int_0^\tau \ee^{(\tau -s)\calAo} \P \Big(\calF_n(s) + q_n(s) + \calA\calD^-\calG_n(s) \Big)\ds + O(\tau^3).
\end{align*}
Here, we have used the fact that $\P v_n(\ttau) = v_n(\ttau) - \calD^-\calG(\ttau)$.
Taking the difference with the exact solution $u(t_{n+1})$ which is given by \eqref{eqn_exact_sln} leads to the local error representation
\begin{align*}
  \delta_{n+1}
  &= \calS_\tau \tilde u_n - u(t_{n+1}) \\
  &= \tau \ee^{\frac \tau 2\calAo} \P \big(  f(w_n(\tfrac \tau 2)) - q_n(\tfrac \tau 2) \big)
     + \int_0^\tau \ee^{(\tau - s)\calAo} \P \Big(q_n(s) - f(u(t_n+s)) \Big) \ds + O(\tau^3).
\end{align*}
Calling again $\hat\psi(s)= q_n(s)-f(u(t_n+s))$, we can rewrite this as
\begin{align*}
  \delta_{n+1}
  &= \underbrace{\tau \ee^{\frac \tau 2\calAo}\P \big( f(w_n(\ttau)) - f(u(t_n+\ttau)) \big)}_{\textcircled{\scriptsize \texttt{1}}}
      \\
     &\qquad\qquad\qquad \underbrace{-\, \tau \ee^{\frac \tau 2\calAo} \P \hat \psi(\ttau)
                   	  	 + \int_0^\tau \ee^{(\tau - s)\calAo} \P \hat \psi(s) \ds }_{\textcircled{\scriptsize \texttt{2}}}  +\, O(\tau^3).
\end{align*}
%
% TERM 1
To estimate {\textcircled{\scriptsize \texttt{1}}} we use the Lipschitz continuity of $f$, the continuity of the projection $\P$, and Lemma~\ref{lem_wn_un} which includes condition \eqref{eqn_lie_qn},
\[
  \big\Vert \tau \ee^{\frac \tau 2\calAo} \P \big(  f(w_n(\tfrac \tau 2)) - f(u(t_n+\ttau)) \big) \big\Vert
  \le \tau\, C\, \Vert w_n(\tfrac \tau 2) - u(t_n+ \tfrac \tau 2) \Vert
  %\le \tau C_{\ee^\tau} C_f C \tau^2
  \le C\, \tau^3. % = O(\tau^3).
\]
%
% TERM 2 - Peano
For an estimate of {\textcircled{\scriptsize \texttt{2}}} we proceed as in \cite{EinO16ppt} and use the Peano kernel representation of the error of the midpoint rule.
However, because of the included projection, we do not need the assumption on $q_n$ at this point.
With $\psi(s) = \ee^{(\tau-s)\calAo} \P \hat\psi(s)$ we have
\begin{align*}
 \int_0^\tau \psi(s)\ds - \tau \psi(\tfrac \tau 2)
 = \int_0^{\tau/2} \frac{s^2}{2} \psi''(s)\ds + \int_{\tau/2}^\tau \frac{(\tau-s)^2}{2} \psi''(s)\ds.
\end{align*}
For the claimed $O(\tau^3)$ property it is sufficient to show that $\psi''(s)$ is bounded,
\begin{align*}
  \psi''(s)
  &= \calAo \ee^{(\tau-s)\calAo} \calAo \P\hat\psi(s) - 2 \calAo \ee^{(\tau-s)\calAo} \P\hat\psi'(s) + \ee^{(\tau-s)\calAo} \P\hat\psi''(s) \\
  &= \calAo \Big[ \ee^{(\tau-s)\calAo} \calAo \P\hat\psi(s) - 2 \ee^{(\tau-s)\calAo} \P\hat\psi'(s) \Big] + \ee^{(\tau-s)\calAo} \P\hat\psi''(s).
\end{align*}
Clearly, the term including $\P\hat\psi''(s)$ is bounded, since $\P$ is a continuous projection.
Also $\ee^{(\tau-s)\calAo} \P\hat\psi'(s)$ is bounded and the preceding operator $\calAo$ can be compensated by parabolic smoothing in the error recursion (except for the local error of the final step).
Finally, we make use of $\P\hat\psi(s) \in D(\calAo)$ %given by \eqref{eqn_strang_qn}
which implies that $\ee^{(\tau-s)\calAo} \calAo \P\hat\psi(s)$ is bounded as well.

Together with the parabolic smoothing property \eqref{eqn_smoothing} this shows
 \begin{align}
\label{eqn_local_error}
  \Vert \ee^{t\calAo} \delta_{n+1} \Vert
  \le C \frac{\tau^3}{t},\quad t>0.
\end{align}
Moreover, proceeding as in the proof of Lie splitting, \eqref{eqn_lie_quadrature} and $\psi(0) = \psi(\ttau) + O(\tau)$ show $\delta_{n+1} = O(\tau^2)$.
This bound is needed for the final step of the error recursion.
%
%
%=======================================================================================
%=======================================================================================
%=========  Global Error
%=======================================================================================
%=======================================================================================
\subsection{Global error}\label{sect_strang_global}
With the representation of $\calS_\tau$ we obtain the error recursion
\begin{align*}
  e_{n+1}
  &= u_{n+1} - u(t_{n+1}) \\
  &= \calS_\tau u_n - \big(  \calS_\tau u(t_n)  -\delta_{n+1} \big) \\
  &= \ee^{\tau\calAo} e_n
     + \tau \ee^{\frac \tau 2\calAo} \P \big(  f(w_n(\tfrac \tau 2)) - f(\tilde w_n(\tfrac \tau 2)) \big)
     + \delta_{n+1} + O(\tau^3).
\end{align*}
Here, $w_n$ and $\tilde w_n$ denote the solutions of \eqref{eqn_strang_B} based on the initial data $u_n$ and $\tilde u_n =u(t_n)$, respectively.
Because of the Lipschitz continuity of $f$ and
\[
  w_n(\tfrac \tau 2) - \tilde w_n(\tfrac \tau 2)
  = \ee^{\frac \tau 2\calAo}  e_n              % \P \big( u_n - u(t_n) \big)
    + \frac \tau 2 \big[ f(u_n) - f( u(t_n)) \big] + O(\tau^2)
\]
the global error can be written as
\begin{align*}
  e_{n+1}
  = \ee^{\tau\calAo} e_n + \tau E_n + \delta_{n+1} + O(\tau^3)
\end{align*}
with $E_n$ satisfying the uniform bound $\Vert E_n\Vert \le C (1+\tau) \Vert e_n\Vert$.
The results on the local error and in particular \eqref{eqn_local_error} imply that
\[
  \Vert \delta_{n+1} \Vert \le C\, \tau^2, \qquad
  \Vert \ee^{k\tau\calAo} \delta_{n+1-k}\Vert \le C \frac{\tau^2}{k}.
\]
Thus, solving the error recursion, we obtain
\begin{align}
\label{eqn_strang_globalErr}
  \Vert e_n \Vert
  \le  C\,  \Vert e_0 \Vert                               % error in initla data (assume that e_0=0)
     + C\, \tau (1+\tau) \sum_{k=0}^{n-1} \Vert e_k\Vert  % Terme von E_n
     + C\, \tau^3 \sum_{k=1}^{n-1} \frac{1}{k\tau}        % delta - Terme
     + C\, \tau^2.                                        % delta_{n+1}
\end{align}
Since we assume $e_0=0$, a Gronwall argument and the fact that the third term on the right-hand side of \eqref{eqn_strang_globalErr} is bounded by $C \tau^2 (1+ \left|\log \tau\right|)$ yields the following result for Strang splitting.
\begin{theorem}[Convergence of Strang splitting]
Given assumptions (A1)-(A6) of Section~\ref{sect_prelim_ass} and a correction $q_n$ which satisfies \eqref{eqn_lie_smoothness} as well as \eqref{eqn_lie_qn}, Strang splitting is convergent of second order.
More precisely, we have the uniform estimate
\[
  \Vert u(t_n) - u_n \Vert \le C\tau^2 (1+\left|\log \tau\right|)
\]
on bounded time intervals $0 \le t_n = n \tau \le T$.
\end{theorem}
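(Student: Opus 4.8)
The plan is to convert the error recursion
\[
  e_{n+1} = \ee^{\tau\calAo} e_n + \tau E_n + \delta_{n+1} + O(\tau^3), \qquad \Vert E_n\Vert \le C(1+\tau)\Vert e_n\Vert,
\]
obtained in Section~\ref{sect_strang_local}, into the accumulated bound~\eqref{eqn_strang_globalErr} and then to close the argument with a discrete Gronwall inequality. First I would unwind the recursion, so that
\[
  e_n = \ee^{n\tau\calAo} e_0 + \sum_{k=0}^{n-1} \ee^{k\tau\calAo} \big( \tau E_{n-1-k} + \delta_{n-k} \big) + O(n\tau^3),
\]
and take norms. Using that $\ee^{t\calAo}$ is uniformly bounded on $[0,T]$, the $E$-terms contribute $C\tau(1+\tau)\sum_{k=0}^{n-1}\Vert e_k\Vert$. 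For the local errors I would treat $k=0$ separately via $\Vert\delta_n\Vert\le C\tau^2$, while for $k\ge 1$ the smoothed estimate~\eqref{eqn_local_error} gives $\Vert\ee^{k\tau\calAo}\delta_{n-k}\Vert \le C\tau^3/(k\tau)=C\tau^2/k$; since $n\tau\le T$ the remainder $O(n\tau^3)$ is itself $O(\tau^2)$. This reproduces~\eqref{eqn_strang_globalErr}.

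Next I would bound the harmonic contribution, $\tau^3\sum_{k=1}^{n-1}1/(k\tau) = \tau^2\sum_{k=1}^{n-1}1/k \le \tau^2(1+\log(n-1))$, which, because $n\le T/\tau$, is at most $C\tau^2(1+\left|\log\tau\right|)$. With $e_0=0$ the estimate then reads
\[
  \Vert e_n\Vert \le C\tau^2\big(1+\left|\log\tau\right|\big) + C\tau(1+\tau)\sum_{k=0}^{n-1}\Vert e_k\Vert ,
\]
and a discrete Gronwall lemma absorbs the sum into a factor of the form $\ee^{CT}$, giving $\Vert u(t_n)-u_n\Vert=\Vert e_n\Vert\le C\tau^2(1+\left|\log\tau\right|)$ uniformly for $0\le t_n\le T$, which is the claim.

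The routine Gronwall wrap-up is not where the difficulty lies; the crucial point, already settled in Section~\ref{sect_strang_local}, is that the local error is only $O(\tau^2)$ in norm but becomes $O(\tau^3)$ after one application of the semigroup, which is exactly what keeps the accumulated error down to a logarithmic loss rather than a full power of $\tau$. I would make sure this rests on two structural facts: in the quadrature part, the Peano-kernel representation of the midpoint rule produces the term $\calAo\,\ee^{(\tau-s)\calAo}\calAo\P\hat\psi(s)$, which is bounded only because $\P\hat\psi(s)\in D(\calAo)$ by the smoothness condition~\eqref{eqn_lie_smoothness}, one of the two factors $\calAo$ then being compensated by parabolic smoothing~\eqref{eqn_smoothing} inside the recursion; and in the consistency part, the term $\tau\,\ee^{\frac\tau2\calAo}\P\big(f(w_n(\ttau))-f(u(t_n+\ttau))\big)$ is $O(\tau^3)$ only because Lemma~\ref{lem_wn_un} supplies $\Vert w_n(\ttau)-u(t_n+\ttau)\Vert\le C\tau^2$, and that lemma is precisely where hypothesis~\eqref{eqn_lie_qn} on the correction $q_n$ enters --- without it this term is merely $O(\tau^2)$ and only first-order convergence survives. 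Hence the main obstacle is verifying that both components of the local error withstand multiplication by $\calAo$ in the error recursion, which is where the analyticity of $\calAo$ together with conditions~\eqref{eqn_lie_smoothness} and~\eqref{eqn_lie_qn} are indispensable.
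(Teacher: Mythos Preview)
Your proposal is correct and follows essentially the same route as the paper: you unwind the error recursion, separate the $k=0$ local error (bounded by $C\tau^2$) from the $k\ge 1$ terms (handled via the smoothed estimate~\eqref{eqn_local_error}), bound the resulting harmonic sum by $C\tau^2(1+|\log\tau|)$, and close with a discrete Gronwall argument. Your additional discussion of where~\eqref{eqn_lie_smoothness}, \eqref{eqn_lie_qn}, and the analyticity of $\calAo$ enter is accurate and matches the paper's local-error analysis in Section~\ref{sect_strang_local}.
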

With this result we close this section and advance to the numerical experiments.

%=======================================================================================
%=========  Numerics
%=======================================================================================
\section{Numerical Examples}\label{sect_numerics}
This section is devoted to the numerical verification of the analytical results obtained in the previous two sections.
We consider three of the examples introduced in Section~\ref{sect_prelim_exp}.
%
%
%=======================================================================================
%=======================================================================================
%=========  Exp 1: Weighted Integral Mean
%=======================================================================================
%=======================================================================================
\subsection{Weighted integral mean}\label{sect_numerics_int}
We consider the semilinear heat equation in $\Omega = (0,1)$ and time interval $[0,0.1]$ with a polynomial nonlinearity as in Section~\ref{sect_prelim_exp_int}.
The resulting equations are given by
\[
  \dot u - \tfrac{1}{10}\Delta u - u^2 + \calD^- \lambda = 0, \qquad
  \calD u(t) := \int_0^1 u(t,x) \sin(\pi x) \dx = \calG(t),
\]
where the right-hand side is given by $\calG(t) = t$.
In addition, we prescribe homogeneous Dirichlet boundary conditions, i.e., $u(t,0) = u(t,1) = 0$, and the following initial condition
\[
  \uo(x) = \sin(2 \pi x)^3, \qquad
  \calD \uo = 0 = \calG(0).
\]
We compare Lie and Strang splitting with and without the correction term which is given by $q_n = f(u_n)$ as proposed in \eqref{eqn_qn_choiceA}.
For the numerical solution we have used a second-order finite difference approximation of the Laplacian with $500$ grid points.
% Lie
The results for Lie splitting are given in Table~\ref{tab_Integral_Lie} and show the predicted first-order convergence with and without the correction term.
Here, the error without correction is even smaller than with $q_n=f(u_n)$.
In contrast, the reversed Lie splitting from Section~\ref{sect_lie_rev} is more accurate by a factor of $2$ with the correction.
% If we consider the reverse order of PDAE and ODE as discussed in Section~\ref{sect_lie_rev}, the convergence order is maintained but the errors are larger by a factor of 4.
%
% Strang
Table~\ref{tab_Integral_Strang} shows the expected convergence rates for Strang splitting, namely first-order convergence without the correction and second-order convergence with the correction.
\begin{table}[]
\centering
\caption{Convergence history of Lie splitting with the constraint on the weighted integral mean (Section~\ref{sect_numerics_int}).}
\label{tab_Integral_Lie}
\begin{tabular}{l | cc | cc}
			&  \multicolumn{2}{c|}{Lie}  & \multicolumn{2}{c}{Lie with correction} 	\\ \hline
step size 	& $\ell^\infty$ error & order & $\ell^\infty$ error & order 	\\ \hline			
2.000e-02   &  8.895e-04  & --       &  2.028e-03  & --          			\\
1.000e-02   &  4.362e-04  & 1.03     &  9.623e-04  & 1.08      				\\
5.000e-03   &  2.161e-04  & 1.01     &  4.687e-04  & 1.04      				\\
2.500e-03   &  1.076e-04  & 1.01     &  2.313e-04  & 1.02      				\\
1.250e-03   &  5.368e-05  & 1.00     &  1.149e-04  & 1.01      				\\
6.250e-04   &  2.681e-05  & 1.00     &  5.727e-05  & 1.00      				\\
\end{tabular}
\end{table}
\begin{table}[]
\centering
\caption{Convergence history of Strang splitting in various norms with constraint on the weighted integral mean (Section~\ref{sect_numerics_int}).}
\label{tab_Integral_Strang}
\begin{tabular}{l | cc | cc | cc}
			&  \multicolumn{2}{c|}{Strang}  &  \multicolumn{2}{c|}{Strang}  & \multicolumn{2}{c}{Strang with correction} 	\\ \hline
step size 	& $\ell^\infty$ error & order   & $\ell^2$ error & order & $\ell^\infty$ error & order 	\\ \hline			
2.000e-02   &  2.212e-04  	& --   			& 1.386e-04		& --   			&  5.657e-05  & --         		\\
1.000e-02   &  1.035e-04  	& 1.10     		& 6.824e-05		& 1.02			&  1.277e-05  & 2.15      		\\
5.000e-03   &  5.006e-05  	& 1.05     		& 3.390e-05		& 1.01			&  3.061e-06  & 2.06      		\\
2.500e-03   &  2.534e-05  	& 0.98     		& 1.690e-05 	& 1.00			&  7.496e-07  & 2.03      		\\
1.250e-03   &  1.275e-05  	& 0.99     		& 8.439e-06		& 1.00			&  1.855e-07  & 2.02      		\\
6.250e-04   &  6.396e-06  	& 1.00     		& 4.217e-06		& 1.00			&  4.613e-08  & 2.01      		\\
\end{tabular} % ufu case
\end{table}
%

%% Local Errors
Finally, we take a look at the convergence orders of the local errors.
It is clear from the analysis of Section~\ref{sect_lie} that Lie splitting converges of order one but the local errors may be of first or second order depending on the chosen sequence of the subsystems.
This can also be observed for the present example, cf.~Table~\ref{tab_Integral_Local}.
%
% Strang
There, we also consider the order of the reversed Strang splitting, meaning that we first solve the nonlinear ODE, then the PDAE, and finally the ODE again.
This does not effect the global error, but the local error loses one order.
\begin{table}[]
\makegapedcells
\centering
\caption{Comparison of the order of the local $\ell^\infty$ errors of Lie and Strang splitting depending on the chosen sequence of subsystems and the choice of $q_n$.}
\label{tab_Integral_Local}
\begin{tabular}{l || cc | cc || cc | cc}
			&  \multicolumn{2}{c|}{Lie} & \multicolumn{2}{c||}{reversed Lie}	&  \multicolumn{2}{c|}{Strang} & \multicolumn{2}{c}{reversed Strang} \\ \hline
\diaghead{\theadfont laengevonCell} {order} {$q_n$}
			&\ \ $0$	& $f(u_n)$	&\ \ $0$	& $f(u_n)$	&\ \ $0$	& $f(u_n)$	&\ \ $0$	& $f(u_n)$ 		\\ \hline			
local error 		&\ \ 2  	& 2      	&\ \ 1  	& 2 		&\ \ 2		& 3			&\ \ 1		& 2    			\\
global error 		&\ \ 1  	& 1      	&\ \ 1  	& 1 		&\ \ 1		& 2			&\ \ 1		& 2      			
\end{tabular}
\end{table}
%
%
%=======================================================================================
%=======================================================================================
%=========  Exp 2: prescribed in Omega_0  (Obstacle)
%=======================================================================================
%=======================================================================================
\subsection{Specification on a subset}\label{sect_numerics_obs}
As second example we consider the same PDE with the constraint that $u$ is prescribed on the subinterval $\Omega_0 = [0.5,\ 0.7]$, cf.~Section~\ref{sect_prelim_exp_obs}.
% n = 250
% T = 0.2
As initial condition we take
\[
  \uo(x) = \sin(\pi x)\cdot \big(1+ \cos(7\pi x)\big),
\]
whereas the constraint is given by
\[
  \calD u(t) = u(t)|_{\Omega_0} = \calG(t) := (1+2t)\, \calG(0),\qquad
  \calG(0) := \uo|_{\Omega_0}.
\]
For this example, the use of a correction satisfying \eqref{eqn_lie_qn} improves the results of Lie splitting by a factor of $10$.
In the following, we compare different kinds of corrections $q_n$ and the influence on the convergence of Strang splitting, see Figure~\ref{fig_exp2_strang}.
\begin{figure}[thbp]
	\begin{center}
    \includegraphics{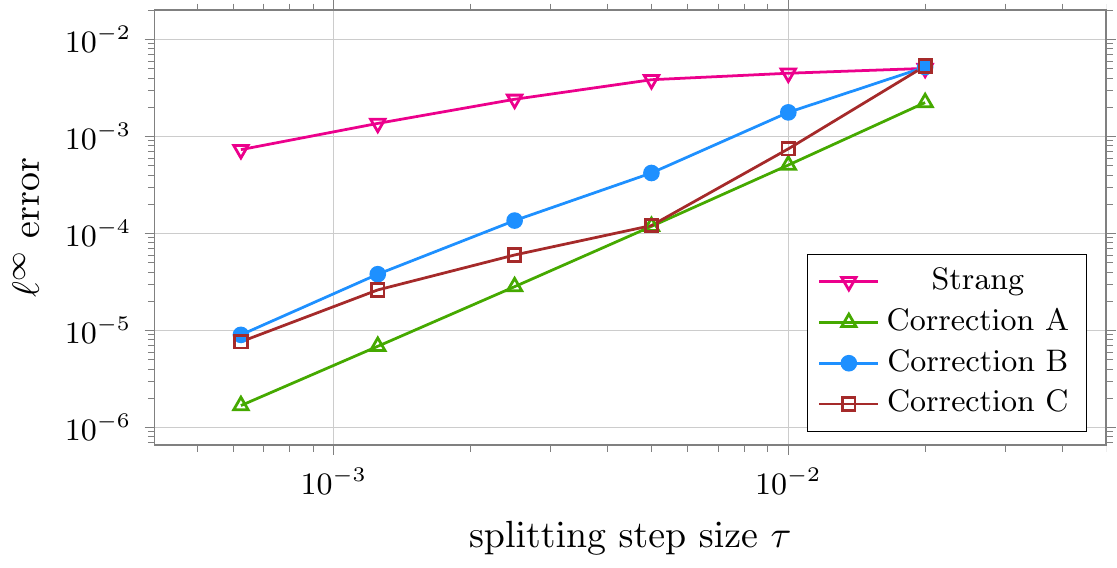}
	\end{center}
\caption{Comparison of different corrections $q_n$ for Strang splitting applied to the problem of Section~\ref{sect_numerics_obs}.}
\label{fig_exp2_strang}
\end{figure}
%
% Corr A
The first correction, called Correction A in the figure, is given by $q_n = f(u_n)$ as proposed in \eqref{eqn_qn_choiceA}.
This means that we simply apply the nonlinearity to the current approximation.
% Corr B (in python heisst die C)
Second, we take the same correction with an additional perturbation outside of $\Omega_0$ (Correction B).
More precisely, we consider
\[
  q_n = f(u_n)+ p, \quad
  p(x) = \begin{cases} \phantom{-} \sin(2 \pi x) \cos(42 \pi x), & x < 0.5  \\
                                 \phantom{-} 0, & 0.5 \le x \le 0.7 \\
                                 -\sin\big(\frac{10}{3} \pi (x-0.7) \big) \cos\big(70 \pi (x-0.7)\big), & x > 0.7 \end{cases}.
\]
Note that this has no influence on condition \eqref{eqn_lie_qn} and thus does not change the order of convergence.
Nevertheless, the errors are larger by a factor of about $5$.
% Corr C
Finally, we take as third approach $q_n= f(\calD^-\calG(t_n))$ as mentioned in \eqref{eqn_qn_choiceB}.
Also here we observe second-order convergence (Correction C).
%
%
%=======================================================================================
%=======================================================================================
%=========  Exp 3: Mechanical Coupling
%=======================================================================================
%=======================================================================================
\subsection{Coupled mechanical system}\label{sect_numerics_spring}
We consider the example from Section~\ref{sect_prelim_exp_spring}, i.e., a string (fixed at its endpoints) which is coupled with a nonlinear spring-damper system, see Figure~\ref{fig_spring} for an illustration.
The computations use a spatial grid with $250$ nodes. The parameters are
\[
  c = 0.5, \qquad    % c_\text{diff}
  d_1 = 10, \qquad   % d_\text{string}
  d_2 = 3, \qquad    % d_\text{spring}
  k_0 = 100, \qquad
  a = 10, \qquad
  \Omega_0 = [0.65,\ 0.7].
\]
Recall that we consider a softening spring with nonlinearity $k(q)= k_0(1 - a^2q^2)\, q$.  % where we assume $|k_aq|<1$.
As described in Section~\ref{sect_prelim_exp_spring}, the constraints are given by
\[
  \calB u(t) := u(t)|_{\Omega_0} = q(t) + 0.05, \qquad
  \calB v(t) = p(t).
\]
As initial conditions we take
\[
  \uo(x) = \frac 15\, \P^u \sin(2\pi x)^3,\quad v_0(x) = \calB^-(0.5),  \qquad
  q_0 = -0.05,\quad p_0 = 0.5.
\]
Here, $\calB^-$ denotes the right-inverse of $\calB$ and $\P^u$ is the projection onto the kernel of $\calB$.
Thus, the given initial data is consistent with the constraint.

As expected, Lie splitting converges with order one whereas Strang splitting converges with order one or two depending on the implemented correction term.
The errors for Strang splitting are given in Table~\ref{tab_Mechanical_Strang}.
\begin{table}[b]
\centering
\caption{Convergence history of Strang splitting in different norms for the coupled mechanical system (Section~\ref{sect_numerics_spring}).}
\label{tab_Mechanical_Strang}
\begin{tabular}{l | cc | cc | cc}
			&  \multicolumn{2}{c|}{Strang}  &  \multicolumn{2}{c|}{Strang}  & \multicolumn{2}{c}{Strang with correction} 	\\ \hline
step size 	& $\ell^\infty$ error & order   & $\ell^2$ error & order & $\ell^\infty$ error & order 	\\ \hline			
4.000e-02   &  1.820e-02  	& --   			& 	4.680e-03	& --   			&  1.339e-02   & --         		\\
2.000e-02   &  8.281e-03  	& 1.14   		& 	1.757e-03	& 1.41   			&  6.101e-03   & 1.13         		\\
1.000e-02   &  5.616e-03  	& 0.56     		& 	1.293e-03	& 0.44			&  1.641e-03   & 1.89      		\\
5.000e-03   &  2.786e-03  	& 1.01     		& 	6.566e-04	& 0.98			&  2.851e-04   & 2.53      		\\
2.500e-03   &  1.414e-03  	& 0.98     		&  	3.338e-04	& 0.98			&  7.697e-05   & 1.89      		\\
1.250e-03   &  7.123e-04  	& 0.99     		& 	1.684e-04	& 0.99			&  1.946e-05   & 1.98      		\\
\end{tabular} % ufu case
\end{table}
%

%
%=======================================================================================
%=========  Conclusion
%=======================================================================================
\section{Conclusion}\label{sect_conclusion}
In this paper, we analysed splitting methods for constrained diffusion-reaction systems.
Splitting methods proved to be advantageous as they allowed us to solve a {\em linear} constrained system and a nonlinear ODE separately.
With the help of a correction term, which we added in each step, we were able to overcome the order reduction of Strang splitting.
Although the numerical experiments show the benefits of an appropriate correction $q_n$, the analysis has shown that Lie splitting is convergent of first order also without any correction.
For Strang splitting, however, the correction is necessary to guarantee the second-order convergence.
%
%=======================================================================================
%=========  Bibliogrphy
%=======================================================================================
%\bibliographystyle{alpha} % myalpha % abbrv
%\bibliography{bib_splitting}

\begin{thebibliography}{HLW10}

\bibitem[Alt15]{Alt15}
R.~Altmann.
\newblock {\em {R}egularization and {S}imulation of {C}onstrained {P}artial
  {D}ifferential {E}quations}.
\newblock PhD thesis, Technische Universit{\"a}t Berlin, 2015.

\bibitem[Des01]{Des01}
S.~Descombes.
\newblock Convergence of a splitting method of high order for
  reaction-diffusion systems.
\newblock {\em Math. Comp.}, 70(236):1481--1501, 2001.

\bibitem[EM13]{EmmM13}
E.~Emmrich and V.~Mehrmann.
\newblock Operator differential-algebraic equations arising in fluid dynamics.
\newblock {\em Comput. Methods Appl. Math.}, 13(4):443--470, 2013.

\bibitem[EN00]{EngN00}
K.-J. Engel and R.~Nagel.
\newblock {\em One-parameter semigroups for linear evolution equations}.
\newblock Springer, New York, 2000.

\bibitem[EO15]{EinO15}
L.~Einkemmer and A.~Ostermann.
\newblock Overcoming order reduction in diffusion-reaction splitting. {P}art 1:
  {D}irichlet boundary conditions.
\newblock {\em SIAM J. Sci. Comput.}, 37(3):A1577--A1592, 2015.

\bibitem[EO16]{EinO16ppt}
L.~Einkemmer and A.~Ostermann.
\newblock Overcoming order reduction in diffusion-reaction splitting. {P}art 2:
  oblique boundary conditions.
\newblock Technical report, 2016.
\newblock arXiv:1601.02288.

\bibitem[FK64]{FujK64}
H.~Fujita and T.~Kato.
\newblock On the {N}avier-{S}tokes initial value problem. {I}.
\newblock {\em Arch. Rational Mech. Anal.}, 16:269--315, 1964.

\bibitem[Hen81]{Hen81}
D.~Henry.
\newblock {\em Geometric theory of semilinear parabolic equations}.
\newblock Springer, Berlin, 1981.

\bibitem[HKO12]{HanKO12}
E.~Hansen, F.~Kramer, and A.~Ostermann.
\newblock A second-order positivity preserving scheme for semilinear parabolic
  problems.
\newblock {\em Appl. Numer. Math.}, 62(10):1428--1435, 2012.

\bibitem[HLW10]{HaiLW10}
E.~Hairer, C.~Lubich, and G.~Wanner.
\newblock {\em Geometric numerical integration}.
\newblock Springer, Heidelberg, second edition, 2010.

\bibitem[HO09]{HanO09}
E.~Hansen and A.~Ostermann.
\newblock High order splitting methods for analytic semigroups exist.
\newblock {\em BIT}, 49(3):527--542, 2009.

\bibitem[HV95]{HunV95}
W.~Hundsdorfer and J.~G. Verwer.
\newblock A note on splitting errors for advection-reaction equations.
\newblock {\em Appl. Numer. Math.}, 18(1-3):191--199, 1995.

\bibitem[HV03]{HunV03}
W.~Hundsdorfer and J.~Verwer.
\newblock {\em Numerical solution of time-dependent
  advection-diffusion-reaction equations}.
\newblock Springer, Berlin, 2003.

\bibitem[KM06]{KunM06}
P.~Kunkel and V.~Mehrmann.
\newblock {\em Differential-algebraic equations: analysis and numerical
  solution}.
\newblock European Mathematical Society (EMS), Z\"urich, 2006.

\bibitem[LMT13]{LamMT13}
R.~Lamour, R.~M{\"a}rz, and C.~Tischendorf.
\newblock {\em Differential-algebraic equations: a projector based analysis}.
\newblock Springer, Heidelberg, 2013.

\bibitem[Lun95]{Lun95}
A.~Lunardi.
\newblock {\em Analytic semigroups and optimal regularity in parabolic
  problems}.
\newblock Birkh\"auser/Springer, Basel, 1995.

\bibitem[Meh15]{Meh15}
V.~Mehrmann.
\newblock Index concepts for differential-algebraic equations.
\newblock In B.~Engquist, editor, {\em Encyclopedia of Applied and
  Computational Mathematics}, pages 676--681. Springer, Berlin, 2015.

\bibitem[MQ02]{McLQ02}
R.~I. McLachlan and G.~R.~W. Quispel.
\newblock Splitting methods.
\newblock {\em Acta Numerica}, 11:341--434, 2002.

\bibitem[Paz83]{Paz83}
A.~Pazy.
\newblock {\em Semigroups of linear operators and applications to partial
  differential equations}.
\newblock Springer, New York, 1983.

\bibitem[Sim98]{Sim98}
B.~Simeon.
\newblock {DAE}s and {PDE}s in elastic multibody systems.
\newblock {\em Numer. Algorithms}, 19:235--246, 1998.

\bibitem[Tay00]{Tay00}
M.E. Taylor.
\newblock Incompressible fluid flows on rough domains.
\newblock In A.~V. Balakrishnan, editor, {\em Semigroups of operators: theory
  and applications}, pages 320--334. Birkh\"auser, Basel, 2000.

\bibitem[You13]{You13}
I.~Yousept.
\newblock Optimal control of quasilinear {$H(\text{curl})$}-elliptic partial
  differential equations in magnetostatic field problems.
\newblock {\em SIAM J. Control Optim.}, 51(5):3624--3651, 2013.

\end{thebibliography}

\end{document}